\newtheorem{theorem}[subsection]{Theorem}
\newtheorem{lemma}[subsection]{Lemma}
\newtheorem{corollary}[subsection]{Corollary}
\theoremstyle{definition}
\newtheorem{definition}[subsection]{Definition}
\newtheorem{remark}[subsection]{Remark}
\newcommand{\Z}{\mathbb{Z}}
\newcommand{\Q}{\mathbb{Q}}
\newcommand{\sign}{\mathrm{sign}}
\newcommand{\inv}{^{-1}}
               \def \cT {{\mathcal T}}
\def\Q{\mathbb{Q}}
\def\Z{\mathbb{Z}}
\def\inv{^{-1}}
\def\diag{\mbox{diag}\,}
\def\St{\mbox{St}}
\newcommand{\GL}{\mathrm{GL}}
\newcommand{\SL}{\mathrm{SL}}
\begin{document}

\title[On the cohomology of $\SL_n(\Z)$]
{On the cohomology of $\SL_n(\Z)$}

\author{Avner Ash} \address{Boston College\\ Chestnut Hill, MA 02445}
\email{Avner.Ash@bc.edu}

\keywords{Cohomology of arithmetic groups, Steinberg
module}

\subjclass{Primary 11F75; Secondary 11F67, 20J06, 20E42}

\begin{abstract}
Denote the  virtual cohomological dimension of $\SL_n(\Z)$ by
$\nu_n=n(n-1)/2$. Let  $St$ denote the Steinberg module of $\SL_n(\Q)$ tensored with $\Q$.  Let $Sh_\bullet\to St$ denote the sharbly resolution of the Steinberg module.
By Borel-Serre duality,  $H^{\nu_n-i}(\SL_n(\Z), \Q)$ is isomorphic to 
$H_{i}(\SL_n(\Z),St)$.   The latter is isomorphic to the sharbly homology 
$H_{i}((Sh_\bullet)_{\SL_n(\Z)})$.
We produce nonzero classes in 
 $H_i(\SL_n(\Z),St)$, for certain small $i$, in terms of sharbly cycles and cosharbly cocycles. 
  \end{abstract}

\maketitle
\section{Introduction}\label{intro}

Denote the  virtual cohomological dimension of $\SL_n(\Z)$ by
$\nu_n=n(n-1)/2$. 
The motivation of this paper is the conjecture of 
\cite{CFP} that 
\[
H^{\nu_n-i}( \SL_n(\Z),\Q)=0 \text{\ if\ } i<n-1. 
\]
 (See that paper for references to proofs of the
conjecture when $i=0,1$ for all $n$, and all $i$ for $n\le 7$.
The conjecture has also been proven for $i=2$, all $n$, in \cite{BMPSW}.)

Our main result is the following theorem:
\begin{theorem}\label{main}
Let $k\ge0$.  Then

(1) If $n=3k+3$, $H^{\nu_n-n}(\SL_{n}(\Z),\Q)\ne0$;

(2) If $n=3k+4$, $H^{\nu_n-(n-1)}(\SL_{n}(\Z),\Q)\ne0$.
\end{theorem}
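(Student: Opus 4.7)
By Borel--Serre duality and the sharbly resolution, it suffices to exhibit nonzero classes in $H_n((Sh_\bullet)_{\SL_n(\Z)})$ for case (1) and in $H_{n-1}((Sh_\bullet)_{\SL_n(\Z)})$ for case (2). My plan is to build such classes inductively by a block-diagonal cross product on sharbly homology, starting from base cases at $\SL_3$ and $\SL_4$, and to verify nontriviality by pairing with explicit cosharbly cocycles constructed in parallel.

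The block-diagonal concatenation of vectors gives a chain map
\[
Sh_a(\Z^m) \otimes Sh_b(\Z^{m'}) \longrightarrow Sh_{a+b}(\Z^{m+m'})
\]
equivariant for the standard embedding $\SL_m(\Z)\times \SL_{m'}(\Z) \hookrightarrow \SL_{m+m'}(\Z)$; composing with the surjection from $(\SL_m\times \SL_{m'})$-coinvariants to $\SL_{m+m'}(\Z)$-coinvariants yields a cross product on sharbly homology. The two base classes to iterate are $\xi_3 \in H_3(\SL_3(\Z),St)$, nonzero because it corresponds via Borel--Serre to the trivial class in $H^0(\SL_3(\Z),\Q)=\Q$, and $\xi_4 \in H_3(\SL_4(\Z),St)$, nonzero because it corresponds to the well-known nonvanishing class in $H^3(\SL_4(\Z),\Q)$ (Lee--Szczarba). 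A direct count of sharbly degrees and ranks shows that the iterated product $\xi_3^{\times(k+1)}$ lands in $H_{3k+3}(\SL_{3k+3}(\Z),St)$ and $\xi_3^{\times k}\times \xi_4$ lands in $H_{3k+3}(\SL_{3k+4}(\Z),St)$, matching $i=n$ in case (1) and $i=n-1$ in case (2).

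The main obstacle is showing that these cross products survive the passage from Levi-coinvariants to $\SL_n(\Z)$-coinvariants: the surjection from $(\SL_m\times \SL_{m'})$-coinvariants onto $\SL_{m+m'}(\Z)$-coinvariants could in principle annihilate the class. My strategy is to detect each cycle by an explicit cosharbly cocycle, also built as a cross product of cocycles dual to $\xi_3$ and $\xi_4$. Such detecting cocycles are available at the base level from the explicit descriptions of $H^0(\SL_3(\Z),\Q)$ and $H^3(\SL_4(\Z),\Q)$ (a simple evaluation at the standard basis in the first case, and a Mennicke-symbol-type cocycle in the second). The cross-product construction produces a cosharbly cochain on $Sh_{a+b}(\Z^{m+m'})$ which is a priori only equivariant for $\SL_m(\Z)\times\SL_{m'}(\Z)$ on the image of the block-sum. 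The technical heart of the argument will be extending this cochain to a genuine $\SL_{m+m'}(\Z)$-equivariant cosharbly cocycle on the full complex, for instance by extending by zero on sharblies that are not block-sums and then separately checking both equivariance under the larger group and the cocycle condition; this is where Ash's reduction-theoretic bookkeeping for sharbly cocycles will need to be brought in. Once such a detector is in hand, its pairing with the cross-product cycle decomposes as a product of base pairings, each nonzero by choice of $\xi_3$ and $\xi_4$, so the class is nonzero in $H_{n}$ or $H_{n-1}$ respectively.
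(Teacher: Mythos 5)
You have the right overall architecture, and it is essentially the paper's: pass to Steinberg homology via Borel--Serre and the sharbly resolution, build cycles by block-diagonal composition starting from $\SL_3$ (the paper manufactures the $\SL_4$ class as $[z_3|z_1]$ rather than importing it from Lee--Szczarba, but that is cosmetic), and detect the composed cycles with cosharbly cocycles built as cross products. You also correctly isolate the real difficulty: surviving the passage from Levi-coinvariants to $\SL_n(\Z)$-coinvariants.

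However, your proposed construction of the detecting cocycle has a genuine gap. Extending the block cross-product cochain ``by zero on sharblies that are not block-sums'' cannot yield a nonzero $\SL_n(\Z)$-invariant functional: a generic $g\in\SL_n(\Z)$ carries a block-sum sharbly to one that is not a block-sum, so invariance would force the functional to vanish on block-sums as well, i.e.\ to be identically zero. The paper's actual definition of $\mu\times\nu$ evaluates a basic sharbly $[v_1,\dots,v_{k+m}]$ by summing over \emph{all} pliable $k$-element subsets $S$ (those spanning an $a$-dimensional subspace), choosing $\gamma\in\SL_n(\Z)$ that moves $S$ into the standard $U$, and pairing $\mu$ against $\gamma S$ and $\nu$ against the projection of the complement; this built-in sum over all positions of $S$ is what restores full $\SL_n(\Z)$-invariance. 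Two consequences, both absent from your sketch, are essential. First, well-definedness: $\gamma$ is determined only up to the stabilizer of $U$, whose diagonal blocks may have determinant $-1$, so one needs a parity hypothesis on $\mu$ and $\nu$ (matching behavior under $\diag(-1,1,\dots,1)$); this is precisely why the odd-parity $\SL_2$ class cannot be used and why the induction runs on $z_3$. Second, the final pairing is not a single product of base pairings but a signed sum over several pliable subsets ($k+1$ of them for $[z_3|z_{3k}]$), and one must check, via the depth-chart and shuffle-sign bookkeeping, that these terms reinforce rather than cancel --- cancellation genuinely occurs in the cases the paper excludes, such as $[z_2|z_2]$ and $[z_4|z_4]$. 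Without the pliable-subset sum, the parity condition, and the non-cancellation check, the argument does not close.
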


Both (1) and (2) are implied by Theorem 2 of the research announcement~\cite{LEE}, but to the best of our knowledge, no proof of Lee's announced theorems has ever appeared.  When $k$ is even 
(1) and (2) are implied by the beautiful preprint~\cite{FB}.
 
The methods of~\cite{FB} are completely different from ours, and indeed Brown obtains many more nonzero classes.  However for odd $k$, the classes in (1) and (2) seem to be new.

Our results support the CFP conjecture, as do the announcement of Ronnie Lee and Brown's results.
\footnote{
Not too much is known about $H^*(\SL_n(\Z),\Q)$ in general. The restriction of the stable cohomology 
\[\varprojlim\limits_{n\to\infty} H^i( \SL_n(\Z), \Q)\to H^i( \SL_n(\Z), \Q)\] for a fixed $n$ was determined by Franke, as explained in  \cite{GKT}.
More classes, related to the Borel classes, appear in \cite{FB}.
Explicit computations of $H^i( \SL_n(\Z), \Q)$ for $n\le 12$ may be found in 
\cite{Soule},  \cite{LS2}, \cite{EVGS} and
\cite{SEVKM}.
Cuspidal cohomology for $\SL_n(\Z)$ for certain $n$ has recently been constructed in~\cite{BCG}.}

In this paper we  use the Borel-Serre isomorphism
\[
H^{\nu_n-i}( \SL_n(\Z), \Q)\approx H_{i}( \SL_n(\Z), St),
\] where 
$St$ denotes the Steinberg module of $\SL_n(\Q)$ tensored with $\Q$. (The notation suppresses the dependence of $St$ on $n$, which will always be clear from the context.) To study $H_{i}( \SL_n(\Z), St)$ we  use the sharbly resolution of $St$,
namely
$Sh_*\to St \to 0$ (see Section~\ref{St}).

There are three steps in the proof.  (A) We apply the ideas in
 \cite{unstable}, 
to the full group $SL_n(\Z)$.  (B) Starting from results in  \cite{AGMpuzzle}, we identify explict sharbly cycles and cocycles witnessing that Theorem~\ref{main} is true for $k=0$.
(C) We apply (A) repeatedly to (B) to obtain the theorem.

A little bit more detail:  (A) Given a cycle $z\in (Sh_i)_{\SL_m(\Z)}$ that represents a nonzero homology class in $H_{i}( \SL_m(\Z), St)$, there will be a cosharbly cocycle $\mu:Sh_i\to\Q$ such that $\mu(z)\ne0$.  Given two such cycles $z_1,z_2$ in degrees $i_1,i_2$, with corresponding cocycles 
$\mu_1,\mu_2$, for $\SL_{m_1}(\Z), \SL_{m_2}(\Z)$ respectively.  We define a composition $z=[z_1|z_2]$ which is a sharbly cycle representing a class in  $H_{i_1+i_2}( \SL_{m_1+m_2}(\Z), St)$.  When $\mu_1$ and $\mu_2$ have the same parity under the natural action by $\GL_*(\Z)$, we define a product $\mu=\mu_1\times\mu_2$, a cosharbly cocycle which pairs with $z$.  Unlike the situation in~\cite{unstable} which dealt with certain congruence subgroups, there is no reason why $\mu(z)$ should be nonzero in general.

(B)   We start with the known nonzero classes in 
$H_{\nu_n}( \SL_n(\Z), St)\approx H^{0}( \SL_n(\Z), \Q)$ for $n=1$ and $n=3$.  
To have a shot at showing that $\mu(z)\ne0$ we need to have explicit sharbly cycles $z_1,z_3$. 
For $n=1$ this is trivial, and for $n=3$ the required data is provided by~\cite{AGMpuzzle}.  

By composition we obtain an explicit sharbly cycle $z_4$ and a
cosharbly cocycle $\mu_4$ such that $\mu_4(z_4)\ne0$.  This agrees with the known result that $ H^{3}( \SL_4(\Z), \Q)=\Q$.  We explain what goes wrong (a parity mismatch) if we try use the nonzero class in 
$H_{2}( \SL_2(\Z), St)\approx H^{0}( \SL_2(\Z), \Q)$.  We also explain why we cannot compose $z_1$ and $z_4$ to get a nonzero class in $H_{3}( \SL_5(\Z), St)\approx H^{7}( \SL_5(\Z), \Q)=0$.

In (C) we continue composing cycles to obtain Theorem~\ref{main}.

I wish to thank Peter Patzt for helpful comments.

\section{The Steinberg module and the sharbly resolution}\label{St}
We quote most of this section from  \cite{AGMpuzzle}.

The \emph{Tits building} $T_n$
is the simplicial complex whose vertices are the proper nonzero
subspaces of $\Q^n$ and whose simplices correspond to flags of
subspaces.  By the
Solomon--Tits theorem $T_{n}$ has the homotopy type of a wedge of
$(n-2)$-dimensional spheres.  It is a left $\GL(n,\Q)$-module and
therefore so is its homology.  
\begin{definition}
Let $n\ge1$ and $k\ge0$.
We define the \emph{Steinberg module} $St$
to be the reduced homology of the Tits building with $\Q$-coefficients:
\[
St=\widetilde H_{n-2}(T_n,\Q).
\]
\end{definition}
Notes: (1) The Steinberg module is usually defined as the reduced homology 
with $\Z$-coefficients, but if we tensor that with $\Q$, we obtain what we are here calling $St$.

(2) $n=1$ is allowed:  $T_1$ is the empty set, and the reduced homology of the empty set is $\Q$ in dimension $-1$, so $St=\Q$ in this case.

\begin{definition}\label{sh}
The \emph{sharbly complex} $Sh_{*} $ is the following
complex of left $\GL(n,\Q)$-modules.   Let $B_{n,k}$ 
be the $\Q$-vector space
 generated by 
symbols $[v_1,\dots,v_{n+k}]$, where the $v_i$ are nonzero column vectors in
$\Q^n$.  Let $R_{n,k}$ be the subspace generated by
 the following relators:

(i) $[v_{\sigma
(1)},\dots,v_{\sigma(n+k)}]-(-1)^{\sign (\sigma)}[v_1,\dots,v_{n+k}]$ for all
permutations $\sigma$;

(ii) $[v_1,\dots,v_{n+k}]$ if $v_1,\dots,v_{n+k}$ do not span $\Q^n$; 

(iii) $[v_1,\dots,v_{n+k}]-[av_1,v_{2},\dots,v_{n+k}]$ for all $a\in
\Q^\times$.

Then $Sh_k=B_{n,k}/R_{n,k}$.

\noindent
The action of $g\in\GL(n,\Q)$ is 
given by $g[v_1,\dots,v_{n+k}]=[gv_1,\dots,gv_{n+k}]$.

\noindent If $G$ is a subgroup of $\GL(n,\Q)$, we denote the coinvariants of the $G$-action on $Sh_k$ by $(Sh_k)_G$. 
Let $C_{n,k}(G)$ be the subspace of $B_{n,k}$ generated by
the relators

(iv)  $[v_1,\dots,v_{n+k}]- [gv_1,\dots,gv_{n+k}]$ for all $g\in G$.

Then
$(Sh_k)_G=B_{n,k}/(R_{n,k}+C_{n,k}(G))$.

\noindent The boundary map $\partial \colon Sh_{k} \rightarrow
Sh_{k-1}$ is given by 
\[ 
\partial([v_1,\dots,v_{n+k}])=
\sum_{i=1}^{n+k} (-1)^{i+1}[v_1,\dots,\widehat{v_i},\dots v_{n+k}],
\]
where as usual $\widehat{v_i}$ means to delete $v_{i}$.
This induces a boundary map $\partial \colon (Sh_{k})_G \rightarrow
(Sh_{k-1})_G$.

We call an element of $Sh_k$ a ``$k$-sharbly'' and an element of the form $[v_1,\dots,v_{n+k}]$ a ``basic'' sharbly, even if
$v_1,\dots,v_{n+k}$ do not span $\Q^n$.  
\end{definition}

Theorem 5 in \cite{AGM5} immediately implies:

\begin{theorem}\label{Sh}  There is a map of $\GL(n,\Q)$-modules 
 $Sh_0 \to St$
such that 
the following is an exact sequence of $\GL(n,\Q)$-modules:
\[
\cdots\to Sh_k \to Sh_{k-1} \to \cdots \to Sh_0 
\to St \to 0.
\]
\end{theorem}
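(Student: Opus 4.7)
The plan is to construct an augmentation $\epsilon\colon Sh_0\to St$ and exhibit a contracting chain homotopy of the augmented sharbly complex; all maps will manifestly be $\GL(n,\Q)$-equivariant, so exactness as abelian groups will give exactness as $\GL(n,\Q)$-modules.

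\textbf{Step 1: the augmentation.} For a basic 0-sharbly $[v_1,\ldots,v_n]$, set $\epsilon([v_1,\ldots,v_n])=0$ if the $v_i$ fail to span $\Q^n$. Otherwise, send it to the fundamental class $[A(v_1,\ldots,v_n)]\in\widetilde H_{n-2}(T_n,\Q)=St$ of the apartment $A(v_1,\ldots,v_n)\subset T_n$ whose chambers are the complete flags $\langle v_{\sigma(1)}\rangle\subset\cdots\subset\langle v_{\sigma(1)},\ldots,v_{\sigma(n-1)}\rangle$ for permutations $\sigma$. This apartment is combinatorially the boundary of an $(n-1)$-simplex and so an $(n-2)$-sphere. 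Relations (i)--(iii) of Definition~\ref{sh} are respected: a permutation reorients the apartment by $(-1)^{\sign(\sigma)}$, rescaling a $v_i$ fixes the underlying line and so the apartment, and the non-spanning case is zero by construction. Surjectivity onto $St$ is Solomon--Tits, since every rational $(n-2)$-cycle in $T_n$ is a sum of apartment classes.

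\textbf{Step 2: contracting homotopy.} Fix a nonzero vector $w\in\Q^n$ and define $h_k\colon Sh_k\to Sh_{k+1}$ on basic sharblies by $h_k([v_1,\ldots,v_{n+k}])=[w,v_1,\ldots,v_{n+k}]$. A direct computation from the boundary formula of Definition~\ref{sh} gives, at the level of the free vector space $B_{n,k}$,
\[
\partial h_k\bigl([v_1,\ldots,v_{n+k}]\bigr)+h_{k-1}\partial\bigl([v_1,\ldots,v_{n+k}]\bigr)=[v_1,\ldots,v_{n+k}],
\]
so $h_*$ is a chain contraction of the complex before imposing relation (ii), and it respects relations (i) and (iii) on the nose. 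A parallel construction at the augmentation end, lifting every apartment class in $St$ to a 0-sharbly through the line $\langle w\rangle$ via Solomon--Tits, provides a section $\iota\colon St\to Sh_0$ with $\partial h_0+\iota\epsilon=\mathrm{id}$. Descending these identities to the sharbly quotient yields the exactness claim.

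\textbf{Main obstacle.} The essential difficulty, and the substance of Theorem~5 of \cite{AGM5}, is that $h_k$ does not automatically respect relation (ii): when $v_1,\ldots,v_{n+k}$ fail to span but $w$ supplies the missing direction, $[w,v_1,\ldots,v_{n+k}]$ is a nonzero element of $Sh_{k+1}$, so $h_k$ sends a zero element of $Sh_k$ to a nonzero element. The remedy is to filter $Sh_*$ by the rank of $\mathrm{span}(v_1,\ldots,v_{n+k})$ and argue exactness stratum by stratum, identifying the lower strata with sharbly complexes for smaller $\SL_m$ and inducting on $n$, or alternatively to average $h_k$ over a choice of $w$ adapted to the input. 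Once the non-spanning locus is under control, everything else is straightforward bookkeeping.
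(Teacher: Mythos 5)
Your Step 1 is fine and is the standard augmentation: a spanning $n$-tuple goes to the class of the apartment determined by the frame $\langle v_1\rangle,\dots,\langle v_n\rangle$, relations (i)--(iii) are visibly respected, and surjectivity is Solomon--Tits. (Note the paper itself offers no proof of this theorem; it quotes it as Theorem 5 of \cite{AGM5}.) The gap is in Step 2, and you have located it yourself in your ``Main obstacle'' paragraph: the cone map $h_k$ does not descend to $Sh_k\to Sh_{k+1}$ because it fails relation (ii), and neither of your proposed remedies is carried out. The second remedy cannot work as stated: a chain contraction must satisfy $\partial h_k+h_{k-1}\partial=\mathrm{id}$, and expanding $h_{k-1}\partial$ uses the vector $w$ chosen for each \emph{face} while $\partial h_k$ uses the $w$ chosen for the whole tuple, so an input-adapted choice of $w$ destroys the telescoping identity. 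Since without relation (ii) the complex is just the (contractible) chain complex of an infinite simplex and has nothing to do with $St$, the step you have left open is precisely the entire content of the theorem.

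For comparison, the argument behind the cited result runs as follows. Let $C_*$ be the complex of all tuples of nonzero vectors modulo relations (i) and (iii) only; this is the augmented simplicial chain complex of the full simplex on the set of lines in $\Q^n$, and your cone homotopy legitimately contracts it. Let $D_*\subset C_*$ be the subcomplex spanned by non-spanning tuples, so that $Sh_k\cong(C/D)_{n-1+k}$. The subcomplex $D_*$ is the chain complex of the simplicial complex of sets of lines contained in some proper subspace, which is homotopy equivalent to the Tits building $T_n$ (e.g.\ via the nerve of the cover by proper nonzero subspaces, or by the poset of spans); by Solomon--Tits its reduced homology is $St$ concentrated in degree $n-2$. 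The long exact sequence of the pair $(C_*,D_*)$, together with the contractibility of $C_*$, gives $H_{n-1+k}(C/D)\cong\widetilde H_{n-2+k}(D)$, which is $St$ for $k=0$ and $0$ for $k>0$ --- exactly the exactness asserted. Your ``filter by rank and induct'' remark points in this direction, but identifying the homology of the non-spanning locus with $St$ is the step that must actually be supplied.
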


\begin{theorem}
Let $G$ be a subgroup of finite index in $\SL_n(\Z)$.  Then
$H_k(G, St)$ is isomorphic to the homology at the $k$-th place of the sequence
\[
\cdots \to Sh_{k+1}\otimes_{G} \Q \to
 Sh_k\otimes_{G} \Q \to Sh_{k-1}\otimes_{G} \Q
 \to  \cdots
\]
\end{theorem}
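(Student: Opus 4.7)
The plan is to show that each $Sh_k$ is a projective $\Q[G]$-module, so that the sequence $Sh_*\to St$ of Theorem~\ref{Sh} becomes a projective resolution of $St$ as a $\Q[G]$-module. Once this is established, the theorem reduces to the standard identification $H_k(G,St)=\Tor_k^{\Q[G]}(\Q,St)$, computed by tensoring any projective resolution of $St$ with $\Q$ over $\Q[G]$, together with the observation that $\Q\otimes_{\Q[G]}Sh_k = Sh_k\otimes_G\Q$.

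To decompose $Sh_k$, note that relation (ii) kills non-spanning sharblies, relation (iii) makes a basic sharbly depend only on the unordered collection of lines $S=\{\Q v_1,\dots,\Q v_{n+k}\}$ (which must be distinct, else relation (i) forces the sharbly to vanish), and relation (i) records a sign for the chosen ordering. Grouping the resulting configurations into $G$-orbits yields an isomorphism of $\Q[G]$-modules
\[
Sh_k \;\cong\; \bigoplus_\alpha \Ind_{H_\alpha}^{G}\epsilon_\alpha,
\]
where $\alpha$ runs over $G$-orbits of spanning unordered $(n+k)$-subsets of lines in $\Q^n$, $H_\alpha:=\Stab_G(S)$ for a representative $S$, and $\epsilon_\alpha:H_\alpha\to\{\pm1\}$ is the sign of the permutation induced on $S$.

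The crux is to show $H_\alpha$ is finite. Any $h\in H_\alpha$ permutes the lines of $S$, so some power fixes each line; since $S$ spans $\Q^n$, this power is simultaneously diagonalizable over $\Q$. Its eigenvalues are rational, and are integral along with their inverses (the characteristic polynomial lies in $\Z[x]$ with constant term $\pm1$), hence equal to $\pm1$. Thus the pointwise line stabilizer is annihilated by squaring and has order at most $2^n$, while $H_\alpha$ is an extension of this by a subgroup of the symmetric group on $S$, giving $|H_\alpha|\le 2^n(n+k)!$.

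With $H_\alpha$ finite, Maschke's theorem makes $\Q[H_\alpha]$ semisimple, so the one-dimensional module $\epsilon_\alpha$ is projective over $\Q[H_\alpha]$; induction preserves projectivity, so each $\Ind_{H_\alpha}^G\epsilon_\alpha$ is projective over $\Q[G]$, and hence so is $Sh_k$. The only step requiring real input is the finiteness of the stabilizers; the rest is routine homological algebra.
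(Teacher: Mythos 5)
Your proof is correct and rests on exactly the same key input as the paper's: the finiteness of the stabilizers in $G$ of nonzero basic sharblies, which (via Maschke and induction from finite subgroups) makes each $Sh_k$ a projective $\Q[G]$-module, so that the sharbly resolution computes $H_*(G,St)$. The paper simply delegates this routine homological algebra to Theorem 7 of \cite{AGM5} rather than writing out the decomposition into induced sign representations as you do.
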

\begin{proof}
Because the stabilizers in $G$ of nonzero basic elements of $Sh_*$ are finite groups, the theorem follows easily from Theorem 7 of 
\cite{AGM5}.
\end{proof}

\begin{definition}
Set $[v_1,\dots,v_{n+k}]_G$ to be the image of $[v_1,\dots,v_{n+k}]$
in the coinvariants  $Sh_k\otimes_{G} \Q$.
\end{definition}

From now on set $G=\SL_n(\Z)$.

\begin{definition} A \emph{$k$-cosharbly} is a $G$-invariant linear functional 
$\mu: Sh_k\to E$
for some trivial $\Q G$-module $E$.  Thus $\mu$ 
is a linear functional $B_{n,k}\to E$ which 
vanishes on 
$R_{n,k}+C_{n,k}(G)$.
It is a \emph{cocycle} if it vanishes on $\partial([v_1,\dots,v_{n+t+1}])$ for
all nonzero $v_1,\dots,v_{n+t+1}\in \Q^n$.

A  \emph{$k$-sharbly cycle} is an element $z\in (Sh_k)_G$ such that 
$\partial z = 0$.
\end{definition}

The following is clear:
\begin{lemma}\label{cosh}
If $\mu$ is a  $k$-cosharbly cocycle and $z\in Sh_k\otimes_{G} \Q$ is a sharbly cycle and $\mu(z)\ne0$, then $z$ represents a nontrivial class in $H_k(G, St)$.
\end{lemma}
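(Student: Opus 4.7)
The plan is to prove this by the standard pairing between a chain and cochain complex. Since $\mu:Sh_k\to E$ is $G$-invariant and vanishes on $R_{n,k}$, it vanishes on $R_{n,k}+C_{n,k}(G)$, so it descends to a well-defined $\Q$-linear map $\bar\mu:(Sh_k)_G\to E$ with $\bar\mu([v_1,\dots,v_{n+k}]_G)=\mu([v_1,\dots,v_{n+k}])$.

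Next, I would check that $\bar\mu$ annihilates the image of $\partial:(Sh_{k+1})_G\to(Sh_k)_G$. It suffices to test on a basic class $[v_1,\dots,v_{n+k+1}]_G$, where $\bar\mu(\partial[v_1,\dots,v_{n+k+1}]_G)=\mu(\partial[v_1,\dots,v_{n+k+1}])$, and this vanishes precisely because $\mu$ is a cocycle in the sense of the preceding definition. Therefore $\bar\mu$ induces a well-defined linear functional on $H_k((Sh_*)_G)$, which under the identification of the theorem above is a linear functional on $H_k(G,St)$.

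Finally, since $\partial z=0$, the element $z$ represents a class $[z]\in H_k(G,St)$. If $[z]$ were trivial, we could write $z=\partial w$ for some $w\in(Sh_{k+1})_G$, and then $\mu(z)=\bar\mu(z)=\bar\mu(\partial w)=0$, contradicting the hypothesis $\mu(z)\ne0$. Hence $[z]$ is nontrivial.

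There is no real obstacle here; the content of the lemma is the familiar observation that evaluation against a cocycle detects nonzero classes in homology, and the verification reduces to checking that the coinvariants quotient and the cocycle condition interact correctly with $\partial$. The substance of the paper lies in later sections, where genuine sharbly cycles $z$ and cosharbly cocycles $\mu$ with $\mu(z)\ne0$ are constructed; this lemma is the elementary mechanism that turns such a pairing into a nonvanishing cohomology statement.
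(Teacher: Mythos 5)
Your proof is correct and is exactly the standard pairing argument the paper has in mind; indeed the paper states this lemma without proof ("The following is clear"), and your write-up simply makes explicit that a cosharbly cocycle descends to the coinvariants, kills boundaries, and hence defines a functional on $H_k((Sh_*)_G)\cong H_k(G,St)$ that detects the class of $z$.
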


\section{Composition}\label{comp}

In this section we adapt the ideas in~\cite{unstable}, which were developed for certain congruence subgroups, to the case of
$\SL_n(\Z)$ itself.  

\begin{definition}
Let $n\ge1$ and $a+b=n$, $a,b>0$.

\noindent $\bullet$ $e_1,\dots,e_n$ denotes the standard basis of 
$\Q^n$.

\noindent $\bullet$ $U$ is the span of $e_1,\dots,e_a$ and $V$ be the span of $e_{a+1},\dots,e_n$.  

\noindent $\bullet$ The isomorphisms $f:\Q^a\to U$ and  $g:\Q^b\to V$ are defined by
$f(x)=\sum_{i=1}^a x_ie_i$ where $x$ has coordinates $x_1,\dots,x_a$,  and  by
$g(y)=\sum_{j=1}^{b} y_je_{a+j}$ where $y$ has coordinates $y_1,\dots,y_b$.

\noindent $\bullet$ $\pi:\Q^n\to \Q^b$ denotes the projection relative to the decomposition $\Q^n=U\oplus V$ composed with $g\inv$.
\end{definition}

\begin{definition}
 Suppose we have a basic sharbly
$[v_1,\dots,v_k]_{\SL_a(\Z)}$ and another 
$[w_1,\dots,w_m]_{\SL_b(\Z)}$.  Then we define
\[
[v_1,\dots,v_k|w_1,\dots,w_m]_{\SL_n(\Z)}=
[f(v_1),\dots,f(v_k), g(w_1),\dots,g(w_m)]_{\SL_n(\Z)}.
\]
More generally, if $x=\sum_ic_i[v^{i}_1,\dots,v^{i}_k]_{\SL_a(\Z)}$ and 
$y=\sum_jd_j[w^{j}_1,\dots,w^{j}_m]_{\SL_b(\Z)}$, define
\[
[x|y]_{\SL_n(\Z)}=\sum_{i,j}
c_id_j[v^{i}_1,\dots,v^{i}_k|w^{j}_1,\dots,w^{j}_m]_{\SL_n(\Z)}.
\]
We call this the  \emph{composition} of $x$ and $y$.
\end{definition}

\begin{theorem}\label{thm:comp}
(1) $[x|y]_{\SL_n(\Z)}$ is well-defined.

(2) If $x,y$ are cycles, then $[x|y]$ is a cycle.

(3) The homology classes of $[x|y]_{\SL_n(\Z)}$  and 
$(-1)^{ab}[y|x]_{\SL_n(\Z)}$  in $H_*({\SL_n(\Z)}, St)$ are equal.
\end{theorem}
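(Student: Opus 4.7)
The proof breaks into the three parts; (1) and (2) are routine verifications, while (3) is the substantive step.

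For (1), I will check that each of the defining relations (i)--(iv) of $Sh_*$ on the $\SL_a$-side transports to a relation on the composite, and similarly for the $\SL_b$-side. Relations (i), (ii), (iii) are immediate because $f$ and $g$ are linear injections with complementary images $U\oplus V=\Q^n$: a permutation or rescaling of the $v$'s becomes the same operation on the first $k$ slots of the composite, and failure of the $v$'s to span $\Q^a$ forces failure of the composite list to span $\Q^n$. For relation (iv), the $\SL_a$-action on the $v$-slots is realized inside $\SL_n(\Z)$ by the block-diagonal embedding $g\mapsto g\oplus I_b$, and the $\SL_b$-action on the $w$-slots by $h\mapsto I_a\oplus h$, so $\SL_a$-coinvariance and $\SL_b$-coinvariance on the factors follow from $\SL_n(\Z)$-coinvariance on the composite.

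For (2), I will split the alternating boundary sum at the transition from the $f$-block to the $g$-block. Since the $j$-th $g$-vector sits at position $k+j$ in the composite, the sign for deleting it is $(-1)^{k+j+1}=(-1)^k(-1)^{j+1}$, so
\[
\partial[x|y] = [\partial x|y] + (-1)^k [x|\partial y],
\]
where $k$ is the number of vectors in $x$. This vanishes whenever $\partial x=0$ and $\partial y=0$.

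For (3), my strategy is to introduce the block-swap matrix $\tau\in\GL_n(\Z)$ defined by $\tau(e_i)=e_{b+i}$ for $i\le a$ and $\tau(e_{a+j})=e_j$ for $j\le b$. A direct calculation shows $\det\tau=(-1)^{ab}$, and $\tau f(v_i)=g'(v_i)$, $\tau g(w_j)=f'(w_j)$, where $f',g'$ are the embeddings used in defining $[y|x]$. Thus $\tau[x|y]$ is just a block-reordering of $[y|x]$. When $ab$ is even, $\tau\in\SL_n(\Z)$ acts trivially on the $\SL_n(\Z)$-coinvariants, and combining this with the block-swap reordering gives the identity directly. When $ab$ is odd, I would modify $\tau$ by a diagonal matrix of determinant $-1$ to land inside $\SL_n(\Z)$, and use the scaling relation (iii) to track the effect of the modification on the chain. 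Assembling the block-swap sign, the action of $\tau$, and this modification yields $[x|y]=(-1)^{ab}[y|x]$ in $H_*(\SL_n(\Z),St)$.

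The hardest step is (3), and specifically the $ab$-odd case, where one must navigate the $\GL_n(\Z)/\SL_n(\Z)$ coset action on the coinvariants. The chain-level reordering sign is governed by the total vector counts $a+i_1$ and $b+i_2$, and reconciling that with the clean $(-1)^{ab}$ appearing in the statement is the heart of the argument; this reconciliation is where passing from chains to homology becomes essential.
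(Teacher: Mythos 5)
Your treatments of (1) and (2) coincide with the paper's: for (1) the paper likewise checks that relations (i)--(iii) transport directly and handles (iv) by lifting $g\in\SL_a(\Z)$ to the block matrix acting as $g$ on $U$ and as the identity on $V$ (and symmetrically for $\SL_b$), and for (2) it derives exactly your Leibniz formula $\partial[x|y]=[\partial x|y]+(-1)^k[x|\partial y]$. For (3) your strategy --- a block swap made unimodular, acting trivially on the $\SL_n(\Z)$-coinvariants, combined with the antisymmetry relation (i) --- is also the paper's (the paper uses an even permutation of the basis vectors and invokes triviality of the inner action on homology, but the mechanism is the same).

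The gap is exactly where you flag it, and it cannot be closed by ``passing from chains to homology.'' When $ab$ is even, your own computation already gives, in $(Sh_*)_{\SL_n(\Z)}$, the identity $[x|y]=(-1)^{(a+i_1)(b+i_2)}[y|x]$: the only sign available comes from relation (i) applied to moving $a+i_1$ entries past $b+i_2$ entries. A chain-level identity $z=\epsilon z'$ between cycles passes to homology unchanged, so homology cannot convert $(-1)^{(a+i_1)(b+i_2)}$ into $(-1)^{ab}$; the exponents differ by $ai_2+bi_1+i_1i_2$, which need not be even (take $a=1$, $b=2$, $i_1=0$, $i_2$ odd). When $ab$ is odd, your determinant-fixing modification by $\diag(-1,1,\dots,1)$ contributes no sign at all through relation (iii) --- scaling one entry by $-1$ is invisible in the sharbly complex --- what it actually does is replace one of the two factors by its image under $s_a$ (or $s_b$), which is the ``parity'' issue treated later in the paper, not a sign you can assemble. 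So as written your argument establishes (at best) the statement with exponent $(a+i_1)(b+i_2)$, not $ab$, and the final sentence of your part (3) is an assertion, not a proof.

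Your discomfort is in fact well placed: the paper's own proof obtains the sign $(-1)^{ab}$ from ``property (i)'' with no computation, whereas property (i) yields the vector-count sign; and the paper's later argument that $[z_2|z_2]_{\SL_4(\Z)}=0$ (there $ab=4$, but each factor has $3$ columns) relies precisely on the vector-count sign $(-1)^{3\cdot3}=-1$, which the stated $(-1)^{ab}=+1$ would not supply. Before trying to complete (3) as stated, you should verify whether the exponent in the statement ought to be $(a+i_1)(b+i_2)$ rather than $ab$, and, in the $ab$-odd case, whether an additional parity hypothesis on the factors is needed.
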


\begin{proof}
(1) All tensor products will be over $\Q$.  
Let's make the following abbreviations: $B_1=B_{a,k}$,
$B_2=B_{b,m}$, $B_3=B_{n,k+m}$;
$W_1=R_{a,k}+C_{a,k}({\SL_a(\Z))}$,
$W_2=R_{b,m}+C_{b,m}({\SL_b(\Z))}$,
$W_3=R_{n,k+m}+C_{n,k+m}({\SL_n(\Z))}$.
Then $(Sh_k)_{\SL_a(\Z)}=B_1/W_1$,
$(Sh_m)_{\SL_b(\Z)}=B_2/W_2$,
$(Sh_{m+k})_{\SL_n(\Z)}=B_3/W_3$.

Define $\phi:B_1\otimes B_2\to B_3$ by
\[
\phi([v_1,\dots,v_k]\otimes [w_1,\dots,w_m])=
[f(v_1),\dots,f(v_k), g(w_1),\dots,g(w_m)].
\]
This induces a map
$\hat \phi:B_1\otimes B_2\to B_3/W_3$.
The kernel of $\hat\phi$ contains $W_1\otimes B_2+
B_1\otimes W_2$.  
Let's check this for $W_1\otimes B_2$; the proof for 
$B_1\otimes W_2$ is similar.

 (i) $[v_{\sigma
(1)},\dots,v_{\sigma(k)}, w_1,\dots,w_m]-
(-1)^{\sign (\sigma)}[v_1,\dots,v_{k}, w_1,\dots,w_m]\in W_3
$
because $\sigma$ may also be viewed as inducing a permutation of 
all the $k+m$ vectors;

(ii) If $v_1,\dots,v_{k}$ do not span $\Q^a$ then
$[v_1,\dots,v_{k}, w_1,\dots,w_m]\in W_3$ because
$v_1,\dots,v_{k}, w_1,\dots,w_m$ do not span $\Q^n$;

(iii) $[v_1,\dots,v_{k},w_1,\dots,w_m]-[av_1,v_{2},\dots,v_{k},w_1,\dots,w_m] \in W_3$, clearly;

(iv) For any $g\in\SL_a(\Z)$, we can lift $g$ to 
an element $g'\in\SL_n(\Z)$ with the properties that $g'|U=g$ and $g'|V=id_V$.
Therefore,
$[gv_1,\dots,gv_{k},w_1,\dots,w_m]
-[v_1,\dots,v_{k},w_1,\dots,w_m]\in W_3$.

Thus
$\hat\phi$ induces a well-defined map
$ \psi:B_1/W_1\otimes B_2/W_2\to B_3/W_3$
and clearly $ \psi(x\otimes y)=[x|y]_{\SL_n(\Z)}$.

(2) Suppose $x=\sum_ic_i[v^{i}_1,\dots,v^{i}_k]_{\SL_a(\Z)}$ and 
$y=\sum_jd_j[w^{j}_1,\dots,w^{j}_m]_{\SL_b(\Z)}$
are cycles. Then
\begin{gather*}
\partial [x|y]_{\SL_n(\Z)}=\sum_{i,j}
c_id_j
\Big( 
\sum_{\alpha=1}^{k} (-1)^{\alpha+1}
[v^{i}_1,,\dots,\widehat{v^{i}_\alpha},\dots,v^{i}_k|w^{j}_1,\dots,w^{j}_m]_{\SL_n(\Z)}+\\
(-1)^k
\sum_{\beta=1}^{m} (-1)^{\beta+1} 
[v^{i}_1,\dots,v^{i}_k|w^{j}_1,\dots,\widehat{w^{j}_\beta},\dots,  
w^{j} _m]_{\SL_n(\Z)} \Big)
.
\end{gather*}
This equals
\begin{gather*}
[\sum_{i}\sum_{\alpha=1}^{k} (-1)^{\alpha+1}c_i[v^{i}_1,\dots,\widehat{v^{i}_\alpha},\dots,v^{i}_k]|
\sum_{j}d_j
[w^{j}_1,\dots,w^{j}_m]]_{\SL_n(\Z)}+\\
(-1)^k
[\sum_{i}c_i[v^{i}_1,\dots,v^{i}_k]|
\sum_{\beta=1}^{m} (-1)^{\beta+1}
d_j[w^{j}_1,\dots,\widehat{w^{j}_\beta},\dots,w^{j}_m]]_{\SL_n(\Z)}=0
\end{gather*}
because $x$ and $y$ are cycles.  

(3) Without loss of generality, either $a$ or $b$ or both is greater than 1.  
Let $U'$ be the span of $e_1,\dots,e_b$ and $U$ be the span of $e_{b+1},\dots,e_n$.   In the construction of $[y,x]_{\SL_n(\Z)}$, $U',V'$ play the roles of $U,V$ respectively.

Choose an even permutation $\sigma$ of $\{1,\dots,n\}$ such that 
$U'$ is the span of the vectors $e_{\sigma{(a+1)}},\dots,e_{\sigma n}$, and $V'$ be the span of $e_{\sigma{1}},\dots,e_{\sigma a}$.     Let $g_\sigma\in\SL_n(\Z)$ be the matrix such that $g_\sigma e_i=e_{\sigma i}$.
 Thus $g_\sigma U=V'$ and $g_\sigma V=U'$.

Using property (i) in the definition of the sharbly complex, we see that
 \[
 (g_\sigma[x,y])_{\SL_n(\Z)}=(-1)^{ab}[y,x]_{\SL_n(\Z)}.
 \]
 
 On the other hand, $g_\sigma$ acts on $H_*({\SL_n(\Z)}, St)$ via the conjugation action of $g_\sigma$
 on the group ${\SL_n(\Z)}$ and the usual action of $g_\sigma$ on the coefficients $St$.  Since $g_\sigma\in\SL_n(\Z)$, this action is trivial.  It can be computed on the chain level by taking the projective resolution $Sh_\bullet\to\St$ and letting $g_\sigma$ act on $Sh_\bullet$ in the usual way.  Therefore
  \[
 (g_\sigma[x,y])_{\SL_n(\Z)}=[x,y]_{\SL_n(\Z)}
 \]
 in homology.
\end{proof}

\section{Product}\label{prod}
In this section we define the product of two cosharblies and show that if they are both cocycles, then so is their product.   
We continue the notation of the preceding section, so $a+b=n$, $U\oplus V = \Q^n$.  Note that $\SL_n(\Z)$ acts transitively on the $a$-dimensional subspaces of $\Q^n$.  

\begin{definition}
A set of vectors $S=\{x_1,\dots,x_r\}\subset\Q^n$ is \emph{pliable} if and only if it spans a subspace of dimension $a$.  We also say ``the vectors 
$x_1,\dots,x_r$ are pliable''.
If they are pliable 
and $\gamma\in\SL_n(\Z)$ satisfies $\gamma x_1,\dots,\gamma x_r\in U$, we say $\gamma$ ``plies'' $S$ and 
$\gamma$ ``plies'' $x_1,\dots,x_r$.
\end{definition}
Of course, if $\gamma$ plies  $x_1,\dots,x_r$ then $\gamma x_1,\dots,\gamma x_r$ span $U$.

\begin{definition}
Let $\xi$ be a $(t-c)$ cosharbly for $\SL_c(\Z)$.  Let $s_c=\diag(-1,1,\dots,1)\in\GL_c(\Z)$.
Say that $\xi$ ``has parity'' if there exists a value of $e=0,1$ such that for any sharbly 
$[x_1,\dots,x_t]_{\SL_c(\Z)}$, 
\[
\xi([s_cx_1,\dots,s_cx_t]_{\SL_c(\Z)})
=(-1)^e\xi([x_1,\dots,x_t]_{\SL_c(\Z)}).
\]
If $\xi$ has parity, say that $\xi$ is even if $e=0$ and odd if $e=1$.  We also say ``$\xi$ has parity $e$''.
\end{definition}

\begin{definition}

Let $\mu$ be a $(k-a)$-cosharbly for $\SL_a(\Z)$ and 
$\nu$ be an $(m-b)$-cosharbly for $\SL_b(\Z)$.    Assume that they each have parity and that they have the same parity.

To make the notation lighter, we will sometimes identify $U$ with $\Q^a$ via the isomorphism $f$.
Define the $(k+m-n)$-cosharbly $\mu\times\nu$ for $\SL_n(\Z)$ by
\begin{gather*}
(\mu\times\nu)([v_1,\dots,v_k,v_{k+1},\dots,v_{k+m}]_{\SL_n(\Z)})=\\
\sum_S \sign(\sigma) 
\mu([\gamma v_{\sigma1},\dots,\gamma v_{\sigma k}]_{\SL_a(\Z)})
\nu([\pi(\gamma v_{\sigma(k+1)}),\dots,
\pi(\gamma v_{\sigma {(k+m)}})]_{\SL_b(\Z)}),
\end{gather*}
and extended by linearity, with the following notation: 
 \begin{quote}
The sum runs over pliable subsets $S$ of
 $\{v_1,\dots,v_k,v_{k+1},\dots,v_{k+m}\}$ of cardinality $k$.  For each such $S$, choose a permutation $\sigma\in S_n$ such that
 $S=\{v_{\sigma1},\dots, v_{\sigma k}\}$ 
 and choose an element $\gamma\in\SL_n(\Z)$ that plies $S$. 
 \end{quote} 
 
 In this definition, we extend $\nu$ so that it equals zero if any of its arguments is 0.  This will happen if and only if for 
 some $\sigma i \not\in \{{\sigma1},\dots,{\sigma k}\}$,  
 $v_{\sigma i}$ is in the span of 
 $v_{\sigma1},\dots,v_{\sigma k}$.
\end{definition}

\begin{remark}
If $\alpha\in\SL_n(\Z)$ stabilizes $U$ then in block form 
\[\alpha=\begin{bmatrix}
\alpha_1&*\\
0&\alpha_2
\end{bmatrix}\] with $\alpha_1\in \GL_a(\Z)$ 
and $\alpha_2\in \GL_b(\Z)$, and 
$\det \alpha_1=\det\alpha_2$.
Therefore
$\alpha$ maps $U\to U$ via multiplication by $f\circ\alpha_1\circ f\inv$ and
if $y\in\Q^n$  then 
$\pi(\alpha y) = \alpha_2\pi(y)$.
\end{remark}

\begin{theorem}\label{times1}
The definition of $\mu\times\nu$ is independent of choices and
defines a cosharbly.
\end{theorem}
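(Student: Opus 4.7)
My plan is to verify, in order, four things: (a) independence of the choice of permutation $\sigma$; (b) independence of the choice of plying element $\gamma$; (c) that the resulting linear functional vanishes on each of the relators of $R_{n,k+m-n}$; and (d) that it is invariant under $\SL_n(\Z)$, i.e.\ vanishes on $C_{n,k+m-n}(\SL_n(\Z))$. Together (c) and (d) say the functional descends from $B_{n,k+m-n}$ to the quotient defining $Sh_{k+m-n}$ and is a cosharbly.

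For (a), if $\sigma$ and $\sigma'$ both realize the same pliable subset $S$ as $\{v_{\sigma1},\dots,v_{\sigma k}\}$, then $\sigma'=\sigma\rho$ with $\rho$ preserving the partition $\{1,\dots,k\}\sqcup\{k+1,\dots,k+m\}$. The difference $\sign(\sigma')-\sign(\sigma)$ equals the sign of $\rho$, which is exactly the combined sign picked up by $\mu$ and $\nu$ from relator (i) applied to the corresponding permutations of their inputs; thus each summand is unchanged. For (b), given two pliers $\gamma,\gamma'$ of $S$, the product $\delta=\gamma'\gamma^{-1}$ stabilizes $U$, so by the remark it acts on $U$ via some $\alpha_1\in\GL_a(\Z)$ and on $\Q^b$ (via $\pi$) by some $\alpha_2\in\GL_b(\Z)$ with $\det\alpha_1=\det\alpha_2=\pm1$. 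If the common determinant is $+1$, relator (iv) for $\mu$ and $\nu$ finishes the job. If it is $-1$, write $\alpha_1=s_a\alpha_1''$ and $\alpha_2=s_b\alpha_2''$ with $\alpha_1''\in\SL_a(\Z)$, $\alpha_2''\in\SL_b(\Z)$; the parity hypothesis then contributes a factor $(-1)^e$ from each of $\mu$ and $\nu$, and since they share the same parity $e$, the product picks up $(-1)^{2e}=1$. This is the step where the parity assumption is essential, and I expect it to be the main technical point.

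For (c), relator (i): a permutation $\tau$ of the inputs $v_1,\dots,v_{k+m}$ permutes the collection of pliable subsets trivially as subsets of $\Q^n$, but re-indexes the chosen $\sigma$'s to $\tau^{-1}\sigma$; the overall sign then picks up $\sign(\tau)$, matching relator (i) on the cosharbly side. Relator (ii): if $v_1,\dots,v_{k+m}$ do not span $\Q^n$, then after plying a pliable $S$, the vectors $\gamma v_{\sigma i}$ with $i\le k$ span $U$, but the remaining $\gamma v_{\sigma i}$ together with $U$ fail to span $\Q^n$; equivalently, their projections $\pi(\gamma v_{\sigma i})$ fail to span $\Q^b$, so the $\nu$-factor vanishes by relator (ii) in $\nu$. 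Relator (iii): scaling $v_1$ by $a\in\Q^\times$ leaves the collection of pliable subsets unchanged; in each summand $v_1$ lies either in $S$ (so $a$ is absorbed by relator (iii) for $\mu$) or outside $S$ (where $\pi(\gamma(av_1))=a\pi(\gamma v_1)$ is absorbed by relator (iii) for $\nu$).

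For (d), let $g\in\SL_n(\Z)$ and replace each $v_i$ by $gv_i$. Since $g$ is invertible, $S$ is pliable iff $gS$ is, so $S\mapsto gS$ is a bijection between the pliable subsets of $\{v_i\}$ and of $\{gv_i\}$. If $\gamma$ plies $S$, then $\gamma g^{-1}\in\SL_n(\Z)$ plies $gS$, and $(\gamma g^{-1})(gv_{\sigma i})=\gamma v_{\sigma i}$; thus, using the already established independence of $\gamma$, the two sums agree term by term. Combining (a)--(d) produces a well-defined linear functional on $(Sh_{k+m-n})_{\SL_n(\Z)}$, completing the proof.
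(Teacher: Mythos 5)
Your proof is correct and follows essentially the same route as the paper: independence of the plying element is handled via the stabilizer of $U$ together with the equal-parity hypothesis (the paper's $\lambda(e,\det\alpha_1)\lambda(e,\det\alpha_2)=1$), independence of the permutation via the alternating relator, and relators (i)--(iv) exactly as in the text. (One wording slip in (a): you want $\sign(\sigma')=\sign(\sigma)\sign(\rho)$, i.e.\ the ratio rather than the ``difference'' of signs.)
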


\begin{proof}
We first show the definition is independent of choices.  
Suppose $\mu$ and $\nu$ both have parity $e$.

Suppose we choose $\delta$ instead of $\gamma$.  Use angle brackets to denote $\Q$-span.  Then
$\langle\gamma S\rangle = \langle\delta S\rangle =  U.$
Therefore $\alpha:=\gamma\delta\inv$ stabilizes $U$.  
Let $\lambda(e,f)=1$ unless $e=1$ and $f=-1$, in which case 
$\lambda(1,-1)=-1$.
It follows that
\begin{align*}
\mu(
[\delta v_{\sigma1},\dots,\delta v_{\sigma k}]_{\SL_a(\Z)})=\\
\lambda(e,\det \alpha_1)\mu([\alpha \delta v_{\sigma1},\dots,\alpha\delta  v_{\sigma k}]_{\SL_a(\Z)})=\\\lambda(e,\det \alpha_1)\mu(
[\gamma v_{\sigma1},\dots,\gamma v_{\sigma k}]_{\SL_a(\Z)})
\end{align*}
and
\begin{align*}
\nu([\pi(\delta v_{\sigma(k+1)}),\dots,
\pi(\delta v_{\sigma {(k+m)}})]_{\SL_b(\Z)})=\\
\lambda(e,\det \alpha_2)\nu([\alpha_2\pi(\delta v_{\sigma(k+1)}),\dots,
\pi(\alpha_2\delta v_{\sigma {(k+m)}})]_{\SL_b(\Z)})=\\
\lambda(e,\det \alpha_2)\nu([\pi(\alpha\delta v_{\sigma(k+1)}),\dots,
\pi(\alpha\delta v_{\sigma {(k+m)}})]_{\SL_b(\Z)})=\\
\lambda(e,\det \alpha_2)\nu([\pi(\gamma v_{\sigma(k+1)}),\dots,
\pi(\gamma v_{\sigma {(k+m)}})]_{\SL_b(\Z)}).
\end{align*}
So the value of the formula for $\mu\times\nu$ is the same for $\delta$ 
as for $\gamma$, 
since $\det\alpha_1=\det\alpha_2$.

Suppose $\tau$ satisfies $S=\{v_{\tau1},\dots, v_{\tau k}\}$.  Then $\psi:=\tau\sigma\inv$ stabilizes 
$\{\sigma1,\dots,\sigma k\}$ and
$\{\sigma(k+1),\dots,\sigma {(k+m)}\}$.
We can choose the same $\gamma$ for $\sigma$ and $\tau$ and compare the terms corresponding to $S$:
\begin{gather*}
\sign(\sigma)
\mu([\gamma v_{\sigma1},\dots,\gamma v_{\sigma k}]_{\SL_a(\Z)})\nu([\pi(\gamma v_{\sigma(k+1)}),\dots,\pi(\gamma v_{\sigma {(k+m)}})]_{\SL_b(\Z)})=\\
\sign(\sigma)\sign(\psi|\{1,\dots,k\})\sign(\psi|\{k+1,\dots,{k+m}\})\times\\
\mu([\gamma v_{\psi\sigma1},\dots,\gamma v_{\psi\sigma k}]_{\SL_a(\Z)})\nu([\pi(\gamma v_{\psi\sigma(k+1)}),\dots,\pi(\gamma v_{\psi\sigma {(k+m)}})]_{\SL_b(\Z)})=\\
\sign(\tau) 
\mu([\gamma v_{\tau1},\dots,\gamma v_{\tau k}]_{\SL_a(\Z)})\nu([\pi(\gamma v_{\tau(k+1)}),\dots,\pi(\gamma v_{\tau {(k+m)}})]_{\SL_b(\Z)}).
\end{gather*}

Next we have to show that $\mu$ vanishes on all the relators (i)-(iv):

(i) Let $\phi\in S_n$.  
Then
\begin{gather*}
(\mu\times\nu)(v_{\phi1},\dots,v_{\phi {(k+m)}})=\\
\sum_S \sign(\sigma) 
\mu([\gamma v_{\sigma\phi1},\dots,\gamma v_{\sigma \phi k}]_{\SL_a(\Z)})
\nu([\pi(\gamma v_{\sigma\phi(k+1)}),\dots,
\pi(\gamma v_{\sigma \phi {(k+m)}})]_{\SL_b(\Z)})
\end{gather*}
where $\gamma$ plies $S=\{v_{\sigma\phi1},\dots,v_{\sigma\phi k}\}$.

Let $\sigma'=\phi\inv\sigma\phi$.  
Then $\gamma$ plies $S=
\{v_{\phi\sigma'1},\dots,v_{\phi\sigma' k}\}$.
So then the right hand side equals 
\[
 \sum_S \sign(\phi)\sign(\phi\sigma') 
\mu([\gamma v_{\phi\sigma'1},\dots,\gamma v_{\phi\sigma' k}]_{\SL_a(\Z)})
\nu([\pi(\gamma v_{\phi\sigma'(k+1)}),\dots,
\pi(\gamma v_{\phi\sigma' {(k+m)}})]_{\SL_b(\Z)}.
\]
Since we can choose $\phi\sigma'$ as the permutation for $S$, this in turn equals
\[
\sign(\phi)
(\mu\times\nu)([v_1,\dots,v_k,v_{k+1},\dots,v_{(k+m)}]_{\SL_n(\Z)}).
\]

(ii) If $v_1,\dots,v_{{k+m}}$ do not span $\Q^n$ then
either there is no pliable $S$, or if $S$ is pliable, the corresponding term in the formula for 
$(\mu\times\nu)$ equals 0  because 
$\pi(\gamma v_{\sigma(k+1)}),\dots,
\pi(\gamma v_{\sigma {(k+m)}})$ cannot span a $b$-dimensional space.

(iii) It is clear that for any $a_i\in\Q$,
$(\mu\times\nu)([a_1v_1,\dots,a_kv_{k},a_{k+1}v_{k+1},\dots,a_{k+m}v_{k+m}]_{\SL_n(\Z)}=
(\mu\times\nu)[v_1,v_{2},\dots,v_{k},v_1,\dots,v_{k+m}]_{\SL_n(\Z)}$.

(iv) For any $g\in\SL_n(\Z)$,
\begin{gather*}
(\mu\times\nu)([gv_1,\dots,gv_k,gv_{k+1},\dots,gv_{k+m}]_{\SL_n(\Z)})=\\
\sum_S \sign(\sigma) 
\mu([\gamma gv_{\sigma1},\dots,\gamma gv_{\sigma k}]_{\SL_a(\Z)})
\nu([\pi(\gamma gv_{\sigma(k+1)}),\dots,
\pi(\gamma gv_{\sigma (k+m)})]_{\SL_b(\Z)})
\end{gather*}
where $\gamma$ plies $\{gv_{\sigma1},\dots,gv_{\sigma k}\}$.
On the other hand, since $\gamma g$ plies $\{v_{\sigma1},\dots,v_{\sigma k}\}$, we have 
\begin{gather*}
(\mu\times\nu)([v_1,\dots,v_k,v_{k+1},\dots,v_{k+m}]_{\SL_n(\Z)})=\\
\sum_S \sign(\sigma) 
\mu([\gamma gv_{\sigma1},\dots,\gamma gv_{\sigma k}]_{\SL_a(\Z)})
\nu([\pi(\gamma gv_{\sigma(k+1)}),\dots,
\pi(\gamma gv_{\sigma ({k+m)}})]_{\SL_b(\Z)}),
\end{gather*}
which equals what we had before.
\end{proof}

\begin{theorem}\label{times2}
If $\mu$ and $\nu$ are cocycles, so is $\mu\times\nu$.
\end{theorem}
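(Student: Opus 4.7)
The plan is to verify $(\mu\times\nu)(\partial[v_1,\ldots,v_{k+m+1}])=0$ on an arbitrary basic sharbly; write $\cV=\{v_1,\ldots,v_{k+m+1}\}$. Expanding $\partial$ and the definition of $\mu\times\nu$ and swapping the order of summation (noting that pliable size-$k$ subsets of $\cV\setminus\{v_i\}$ are precisely pliable size-$k$ subsets $S\subset \cV$ with $v_i\notin S$, and that a single ply $\gamma_S$ for $S$ serves uniformly in $i$), the expression becomes
\[
\sum_{\substack{S\subset \cV\\ |S|=k,\,S\text{ pliable}}}\mu(\gamma_S S)\sum_{v_i\in \cV\setminus S}(-1)^{i+1}\sign(\sigma_{S,i})\,\nu\bigl(\pi\gamma_S(\cV\setminus S\setminus\{v_i\})\bigr),
\]
where $\sigma_{S,i}$ is the shuffle placing $S$ first among the $k+m$ surviving vectors of $\cV\setminus\{v_i\}$.

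Next I would classify each pliable $S$ by $D_S:=\{v\in \cV\setminus S : v\in\mathrm{span}(S)\}$, which controls when $\nu$ vanishes by the extension convention that $\nu$ is zero if any argument is zero. If $|D_S|\ge 2$, every inner term contains a zero projection and the whole contribution vanishes. If $|D_S|=0$, a direct shuffle-sign computation collapses the inner sum to a constant $\epsilon(S)$ (depending only on $S$) times $\nu(\partial[\pi\gamma_S v_{l_1},\ldots,\pi\gamma_S v_{l_{m+1}}])$ with $\{v_{l_r}\}=\cV\setminus S$; since all projections are nonzero, this vanishes by $\nu$'s cocycle condition. If $|D_S|=1$, only the term deleting the unique $v\in D_S$ is nonzero, and these surviving contributions I would group by $T_0:=S\cup D_S$, which has size $k+1$, spans the same $a$-dimensional subspace as $S$, and necessarily satisfies $D_{T_0}=\emptyset$.

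For fixed $T_0$, each redundant $t\in T_0$ (meaning $t\in\mathrm{span}(T_0\setminus\{t\})$) yields $S_t:=T_0\setminus\{t\}$ pliable with $D_{S_t}=\{t\}$; non-redundant $t$ contribute zero automatically since $\mu$ vanishes on non-spanning sharblies. A single $\gamma$ plies every $S_t$ (shared span), so the factor $\nu(\pi\gamma(\cV\setminus T_0))$ is independent of $t$ and pulls out of the sum over $t$. The key combinatorial identity $\tau_s=s+p(s)-1$ — where $\tau_s$ is the $\cV$-position of $t$, $s$ its position in $T_0$, and $p(s)-1$ the count of elements of $\cV\setminus T_0$ preceding $t$ — reduces the remaining coefficient of $\mu(\gamma S_t)$ to $(-1)^s$ up to a $t$-independent global sign. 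The sum over $s$ is then proportional to $\mu(\partial[\gamma v_{\tau_1},\ldots,\gamma v_{\tau_{k+1}}])$, which vanishes because $\mu$ is a cocycle. The hard part will be the sign bookkeeping in this last step: verifying that the sign $(-1)^{\tau_s+1}$ from the absolute $\cV$-position together with the shuffle sign $\sign(\sigma_{S_t,t})$ and the identity $\tau_s=s+p(s)-1$ produce exactly the alternating pattern of $\mu$'s cocycle identity on $\gamma T_0$.
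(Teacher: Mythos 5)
Your proposal is correct and takes essentially the same approach as the paper's proof: your stratification of pliable sets $S$ by $|D_S|\in\{0,1,\ge 2\}$ coincides with the paper's grouping by palettes $T=S\cup D_S$ of size greater than $k+1$, equal to $k$, or equal to $k+1$, and your three cases (vanishing via zero projections, collapse to $\nu(\partial\,\cdot)$, collapse to $\mu(\partial\,\cdot)$) are exactly the paper's Cases 1--3. Your sign identity $\tau_s=s+p(s)-1$ is the paper's ``valence'' computation in different notation, so the remaining bookkeeping goes through as in the paper.
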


\begin{proof}
If $v_1,\dots,v_r$ are a sequence of vectors in $\Q^n$, define a ``palette'' to be a maximal pliable subset of them.  Any pliable subset is contained in a unique palette.

Let $r=k+m+1$. We are to show that 
\[
(\mu\times\nu)(\partial [v_1,\dots,v_r]_{\SL_n(\Z)} )= 0.
\]
The left hand side equals
\[
\sum_{j=1}^r (-1)^{j+1}\sum_S \sign(\sigma)
\mu([\gamma v^j_{\sigma1},\dots,\gamma v^j_{\sigma k}]_{\SL_a(\Z)})
\nu([\pi(\gamma v^j_{\sigma(k+1)}),\dots,
\pi(\gamma v^j_{\sigma {(k+m)}})]_{\SL_b(\Z)})
\]
where $(v^j_1,\dots,v^j_{r-1})=(v_1,\dots,\widehat v_j, \dots,v_r)$.

We will group the terms by palette, and show that the sum of each group vanishes.  For the $\sigma$ used in the term corresponding to $S$, we will choose the $(k,m)$-shuffle permutation such that
$\{v_{\sigma1},\dots,v_{\sigma k} \} = S$.
(A shuffle permutation is one satifying $\sigma(i)<\sigma(\ell)$ if $1\le i<\ell\le k$
and $\sigma(i)<\sigma(\ell)$ if $k+1\le i<\ell\le k+m$.)

Fix a palette $T$ and let $\cT_T$ be the set of all 
$\{v^j_{\sigma1},\dots, v^j_{\sigma k}\}$ which are subsets of $T$.   Define $\Xi_T=$
\[
\sum_{\{v^j_{\sigma1},\dots, v^j_{\sigma k}\}\in\cT_T}
(-1)^{j+1} \sign(\sigma)
\mu([\gamma v^j_{\sigma1},\dots,\gamma v^j_{\sigma k}]_{\SL_a(\Z)})
\nu([\pi(\gamma v^j_{\sigma(k+1)}),\dots,
\pi(\gamma v^j_{\sigma {(k+m)}})]_{\SL_b(\Z)}).
\]
Then $(\mu\times\nu)(\partial [v_1,\dots,v_r]_{\SL_n(\Z)} )=\sum_T \Xi_T$.
By definition, $|T|\ge k$.

Case 1: $|T|>k+1$.  In each term of the sum,
 there is some $\ell>k$ such that
$v^j_{\sigma\ell}$ is in the span of 
$v^j_{\sigma1},\dots, v^j_{\sigma k}$.
Therefore $\pi(v^j_{\sigma\ell})=0$, so that $\nu=0$ and each term of the sum equals 0.

Case 2: $|T|=k$.  In this case there is only one $S$, namely $S=T$.  There will be $r-k=m+1$ terms in the sum, namely one term for each
$j$ such that $v_j\not\in T$.  For each such $j$ we have
$v^j_{\sigma1},\dots, v^j_{\sigma k}$ constant, independent of $j$, since they must constitute the set $T$ and their subscripts must be in increasing order, since $\sigma$ is a shuffle.  Call these constant values $v_{\sigma1},\dots, v_{\sigma k}$.

Let $\tau$ be the $(k,r-k)$-shuffle permutation of $\{1,\dots,r\}$ that agrees with $\sigma$ on $1,\dots,k$.
List the vectors in $\{v_1,\dots,v_r\}-T$
in increasing order of subscripts:
$v_{\tau (k+1)},\dots, v_{\tau (r)}$.
Then in each term, the omitted vector $v_j$ will be one of these, say 
$v_{\tau (k+x)}$.  In other words, $j=\tau(k+x)$. 

Then
\[
(-1)^j
(v^j_{\sigma(k+1)}),\dots,v^j_{\sigma {(k+m)}})=
(-1)^{\tau (k+x)+1}
( v_{\tau (k+1)},\dots, \widehat{v_{\tau (k+x)}},\dots, v_{\tau (r)}).
\]
Then, using the same $\gamma$ for all terms,
$\Xi_T=$
\begin{gather*}
\mu([\gamma v_{\sigma1},\dots,\gamma v_{\sigma k}]_{\SL_a(\Z)})\times\\
\sum_{x=1}^{m+1}
(-1)^{\tau (k+x)+1}\sign(\sigma)
\nu([\pi(\gamma v_{\tau (k+1)}) ,\dots,
\widehat{\pi(\gamma v_{\tau (k+x)})},\dots
\pi(\gamma v_{\tau (k+m+1)})
]_{\SL_b(\Z)}).
\end{gather*}
Note that $\sigma$ is now a function of $j$ and so a function of $x$.
None of the arguments of $\nu$ vanish, because $T$ is a palette.
We proceed to determine the sign of each term.

Call the vectors whose indices are $\sigma(1),\dots,\sigma(k)$ ``red'' and call the other vectors ``blue''.  
The shuffle permutation $\tau$ pushes all the red vectors to the left past all the blue vectors, without changing the order of the red vectors nor of the blue vectors.  

Looking at the term in $\Xi_T$ coming from $j=\tau(k+x)$, note that $v_j$ is a blue vector. Let $f(x)$ be the number of red vectors to the right of $v_j$.  We delete $v_j$ and perform the corresponding shuffle permutation $\sigma$.  The only difference between $\sigma$ and $\tau$ is that $v_j$ is missing.  The $f(x)$ red vectors to the right of $v_j$ don't need to jump over $v_j$.  So $\sign(\sigma)=(-1)^{f(x)}\sign(\tau)$.
 Thus
 $\Xi_T=$
\begin{gather*}
\sign(\tau)\mu([\gamma v_{\sigma1},\dots,\gamma v_{\sigma k}]_{\SL_a(\Z)})\times\\
\sum_{j\in A} (-1)^{\tau (k+x)+1} (-1)^{f(x)}
\nu([\pi(\gamma v_{\tau (k+1)}) ,\dots,
\widehat{\pi(\gamma v_{\tau (k+x)})},\dots
\pi(\gamma v_{\tau (k+m+1)})
]_{\SL_b(\Z)}).
\end{gather*}
Because $\nu$ is a cocycle, and none of its arguments is 0, it will follow
 that $\Xi_T=0$ if we show that the sign 
\[
(-1)^{\tau(k+x)+f(x)}
\]
alternates as $x$ increases from 1 to 2, from 2 to 3, and so one until it goes from $m$ to $m+1$.  Let $1\le x\le m$ and $j=\tau(k+x)$.
We are starting with the blue vector $v_j$.  Suppose the next blue vector after it is $v_{j+\alpha}$.

(i) If $\alpha=1$,  $v_{j+1}$ is the very next vector.  
Then  $\tau(k+x+1)=\tau(k+x)+1$ has increased by 1 and $f(x+1)=f(x)$ remains constant.

(ii) If  $\alpha>1$, there are $\alpha-1$ red vectors between 
$v_j$ and $v_{j+\alpha}$. Then 
$\tau(k+x+1)=\tau(k+x)+\alpha$ has increased by $\alpha$, 
but $f(x+1)=f(x)-(\alpha-1)$ has decreased by $\alpha-1$.
This finishes Case 2.

Case 3: $|T|=k+1$.  Now there are $k+1$ $S$'s
in $\cT_T$.  Let $T=\{v_{c_1},\dots,v_{c_{k+1}}\}$, where the subscripts are in increasing order.  
Let $\tau$ be the $(k+1,r-k-1)$ shuffle that takes $(1,\dots,k+1)$ to $(c_1,\dots,c_{k+1})$.  Then the elements of $\cT_T$ are
$S_\ell=T-\{v_{\tau(\ell)}\}$, $\ell=1,\dots,k+1$.

Recall that $\Xi_T=$
\begin{gather*}
\sum_{\{v^j_{\sigma1},\dots, v^j_{\sigma k}\}\in\cT_T}
(-1)^{j+1}\sign(\sigma)
\mu([\gamma v^j_{\sigma1},\dots,\gamma v^j_{\sigma k}]_{\SL_a(\Z)})
\times\\
\nu([\pi(\gamma v^j_{\sigma(k+1)}),\dots,
\pi(\gamma v^j_{\sigma {(k+m)}})]_{\SL_b(\Z)}),
\end{gather*}
and our task to show that this equals 0.  

Unlike Case 2, in the current case, because $T$ has $k+1$ elements,  the deleted vector $v_j$ in a given term of $\Xi_T$ could a priori be anything.  However, if we delete $v_j\not\in T$, then 
$\nu$ will have to involve an argument of the form
$\pi(\gamma v_d)$ where $v_d\in T$.  Then $\gamma v_d$ will be 
 linearly dependent on the arguments of $\mu$ and 
$\pi(\gamma v_d)=0$ .
Hence, the argument of $\nu$ will contain a 0 vector, and the corresponding term will vanish.  So we only need to retain terms where $v_j$ is deleted from $T$.

Now
$S_\ell={\{v^j_{\sigma1},\dots, v^j_{\sigma k}\}\in\cT_T}$ arises by deleting $v_j$ from $v_1,\dots,v_r$, where 
$j=\tau(\ell)$.  In this case, $\sigma$ is the $(k,r-1-k)$-shuffle $\sigma_\ell$ such that 
$\{\sigma(1), \dots, \sigma(k)\}=
\{\tau(1), \dots, \widehat{\tau(\ell)}, \dots, \tau(k+1)\}$.
The other indices $\sigma_\ell(k+1)=\tau_\ell(k+2),\dots,\sigma_\ell(k+m)
=\tau_\ell(k+m+1)$ are independent of $\ell$.

Using the same $\gamma$ to ply each $S_\ell$, we have that
 $\Xi_T=$
\begin{gather*}
\nu([\pi(\gamma v^j_{\tau(k+2)}),\dots,
\pi(\gamma v^j_{\tau {(k+m+1)}})]_{\SL_b(\Z)})\times\\
\sum_{\ell=1}^{k+1}
(-1)^{\tau(\ell)+1}\sign(\sigma_\ell)
\mu([\gamma v_{\tau1},\dots,
\widehat{\gamma v_{\tau \ell}},\dots,
\gamma v_{\tau (k+1)}]_{\SL_a(\Z)}).
\end{gather*}

Because $\mu$ is a cocycle, and none of its arguments is 0, it will follow that $\Xi_T=0$ if we show that the sign 
\[
(-1)^{\tau(\ell)} \sign(\sigma_\ell)
\]
alternates as we go from $\ell$ to $\ell+1$, for $\ell=1,\dots,k$.

Call the vectors whose indices are $\sigma(1),\dots,\sigma(k+1)$ ``red'' and call the other vectors ``blue''.  The \emph{valence} of a red vector is the number of blue vectors to its left, when we list the vectors in the order 
$v_1,v_2,\dots,v_{k+1+m}$.

The parity of $\tau$ is the parity of the sum of all the valences of all the red vectors, because the shuffle permutation $\tau$ starts with the first red vector and jumps it over all the blue vectors to its left, then the second red vector jumps over all the blue vectors to its left and so on.

The shuffle $\sigma_\ell$ is the same as $\tau$ except that the red vector $v_{\tau(\ell)}$ has been deleted.  Therefore the
sign of $\sigma_\ell$ is the sign of $\tau$  times 
$(-1)^{\text{valence of  the deleted red vector}} $.
 Since the sign of $\tau$ is constant, we are reduced to proving that 
\[
(-1)^{\tau(\ell)}(-1)^{\text{valence of  }v_{\tau(\ell)} }
\]
 alternates sign.
 
Extend the definition of $\tau$ so that $\tau(0)=0$ and let $v_0$ denote a ``ghost red vector'' to the left of all the other vectors.
The number of blue vectors between the red vectors 
$v_{\tau(\ell)}$ and $v_{\tau(\ell-1)}$
is exactly $\tau(\ell)-\tau(\ell-1)-1$.
 Therefore the valence of $v_{\tau(\ell)}$ is 
 \[
 (\tau(\ell)-\tau(\ell-1)-1)+(\tau(\ell-1)-\tau(\ell-2)-1)+\dots+
 (\tau(1)-\tau(0)-1)
 \]
 which equals $\tau(\ell)-\ell$.
 So
 \[
(-1)^{\tau(\ell)}(-1)^{\text{valence of  }v_{\tau(\ell)} }=
(-1)^{\tau(\ell)}(-1)^{\tau(\ell)}(-1)^\ell,
\]
 which does alternate sign as $\ell$ increases.
 \end{proof}
 
 \begin{corollary}
Suppose $z_1$ represents a class in $H_i(\SL_a(\Z),St)$, and
$z_2$ represents a class in $H_j(\SL_b(\Z),St)$, and
 $\mu_1$ is a cosharbly cocycle such that $\mu_1(z_1)$ is defined, and
  $\mu_2$ is a cosharbly cocycle such that $\mu_2(z_2)$ is defined.  Assume that $\mu_1$ and $\mu_2$ have the same parity and that 
$(\mu_1\times\mu_2)([z_1|z_2]_{\SL_{a+b}(\Z)})\ne0$.
Then
$[z_1|z_2]_{\SL_{a+b}(\Z)}$ represents a nonzero class 
in $H_{i+j}(\SL_{a+b}(\Z),St)$.
 \end{corollary}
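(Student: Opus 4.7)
The plan is to observe that this corollary is a direct assembly of all the machinery built up in Sections~\ref{comp} and~\ref{prod}, together with Lemma~\ref{cosh}. There is essentially no new argument needed beyond checking that the hypotheses of each cited result are satisfied.

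First I would invoke Theorem~\ref{thm:comp}(2): since $z_1$ and $z_2$ are sharbly cycles (being representatives of homology classes), the composition $[z_1|z_2]_{\SL_{a+b}(\Z)}$ is a well-defined (by part (1)) sharbly cycle of degree $i+j$ in $(Sh_{i+j})_{\SL_{a+b}(\Z)}$. Next, because $\mu_1$ and $\mu_2$ are cosharbly cocycles with the same parity, Theorem~\ref{times1} guarantees that $\mu_1\times\mu_2$ is a well-defined cosharbly for $\SL_{a+b}(\Z)$, and Theorem~\ref{times2} upgrades this to the statement that $\mu_1\times\mu_2$ is a cosharbly cocycle.

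At this point we have a sharbly cycle $[z_1|z_2]_{\SL_{a+b}(\Z)}$ of degree $i+j$ and a cosharbly cocycle $\mu_1\times\mu_2$ of degree $i+j$ for the group $\SL_{a+b}(\Z)$, together with the hypothesis that their pairing is nonzero. Lemma~\ref{cosh}, applied with $k=i+j$ and $G=\SL_{a+b}(\Z)$, then concludes immediately that $[z_1|z_2]_{\SL_{a+b}(\Z)}$ represents a nontrivial class in $H_{i+j}(\SL_{a+b}(\Z),St)$.

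Since every step is a direct citation, there is no serious obstacle here; the only thing to verify carefully is that the parity and pliability hypotheses needed to form $\mu_1\times\mu_2$ via the definition in Section~\ref{prod} are indeed part of the assumptions of the corollary (they are: ``$\mu_1$ and $\mu_2$ have the same parity''). Thus the corollary amounts to packaging the construction in a form convenient for the inductive production of nonzero classes in step~(C) of the outline in Section~\ref{intro}.
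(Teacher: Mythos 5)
Your proof is correct and follows exactly the same route as the paper's: cite Theorem~\ref{thm:comp} to see $[z_1|z_2]$ is a cycle of degree $i+j$, Theorems~\ref{times1} and~\ref{times2} to see $\mu_1\times\mu_2$ is a cosharbly cocycle, and then Lemma~\ref{cosh} to conclude from the nonzero pairing. Nothing to add.
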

 
 \begin{proof}
 By the theorems, 
  $\mu_1\times\mu_2$ is a sharbly cocycle, and 
  $[z_1|z_2]_{\SL_{a+b}(\Z)}$ is a sharbly cycle, so if they pair to something nonzero,   $[z_1|z_2]_{\SL_{a+b}(\Z)}$ represents a nonzero class in $H_{*}(\SL_{a+b}(\Z),St)$.  To determine $*$, note that
  $z_1$ is in $(Sh_{i})_{\SL_a(\Z)}$ and
  $z_2$ is in $(Sh_{j})_{\SL_b(\Z)}$, so that
    $[z_1|z_2]_{\SL_{a+b}(\Z)}$ is in $(Sh_{i+j})_{\SL_{a+b}(\Z)}$.
 \end{proof}
 
 \section{Starter cases: $n=1,3$}
 To get started we need some sharbly cycles and cocycles.
 
 First let $n=1$.  Then $\SL_1(\Z)$ is the trivial group, and $H_0(\SL_1(\Z),St)=H^0(\SL_1(\Z),\Q)=\Q$.
 The sharbly $z_1:=[1]_{\SL_1(\Z)}$ is a cycle, and the cosharbly 
 $\mu_1$ that takes $[1]$ to 1 is a cocycle, and $\mu_1(z_1)=1$.  Therefore 
 $z_1$ generates $H_0(\SL_1(\Z),St)$.  This proves the rather trivial theorem:

 \begin{theorem}
(1) $ \mu_1(z_1)\ne0$.
(2) $H_0(\SL_1(\Z),St)\ne0$.
(3) $H^0(\SL_1(\Z),\Q)\ne0$.
 \end{theorem}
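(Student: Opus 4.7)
The plan is to verify the three claims in order by essentially unpacking the definitions for the extremely small case $n=1$. The main observation is that $\SL_1(\Z)$ is the trivial group, so the sharbly complex degenerates to something very simple, and there is essentially nothing to check beyond well-definedness.

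First I would show that $z_1:=[1]_{\SL_1(\Z)}$ is a sharbly cycle. Since $n=1$ and $k=0$, we have $Sh_0$ spanned by $[v]$ for $v\in\Q^\times$; relator (iii) collapses all such generators to one, so $Sh_0\cong\Q$ is generated by $[1]$. The boundary map $\partial\colon Sh_0\to Sh_{-1}$ vanishes (there is no lower degree), so $z_1$ is trivially a cycle. Next I would check that $\mu_1\colon Sh_0\to\Q$ with $\mu_1([1])=1$ is well-defined: the antisymmetry relator (i) is vacuous on one vector, relator (ii) is empty (a single nonzero vector spans $\Q^1$), relator (iii) is respected since we send every $[av]$ to $1$, and relator (iv) is trivial because $\SL_1(\Z)=\{1\}$.

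Then I would verify that $\mu_1$ is a cocycle. One must show $\mu_1(\partial[v_1,v_2])=0$ for all nonzero $v_1,v_2\in\Q$, which reads $\mu_1([v_2])-\mu_1([v_1])=1-1=0$ by the previous step. Therefore $\mu_1$ is a $0$-cosharbly cocycle and $\mu_1(z_1)=1\ne0$, giving (1).

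For (2), Lemma~\ref{cosh} applies directly: the pairing $\mu_1(z_1)\ne0$ forces $z_1$ to represent a nontrivial class in $H_0(\SL_1(\Z),St)$, so this group is nonzero. Finally, (3) follows either from the Borel--Serre isomorphism $H^{\nu_1-0}(\SL_1(\Z),\Q)\cong H_0(\SL_1(\Z),St)$ with $\nu_1=0$, or even more elementarily from the fact that $\SL_1(\Z)$ is trivial, whence $H^0(\SL_1(\Z),\Q)=\Q\ne0$. There is no real obstacle here; the only ``content'' is to confirm that the conventions of Section~\ref{St} (especially the convention $St=\Q$ when $n=1$) mesh correctly with Lemma~\ref{cosh} in this degenerate case.
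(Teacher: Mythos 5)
Your proposal is correct and follows essentially the same route as the paper, which simply asserts that $z_1$ is a cycle, that $\mu_1$ is a cocycle with $\mu_1(z_1)=1$, and concludes via the pairing (Lemma~\ref{cosh}) and Borel--Serre duality. You merely supply the routine verification of the relators and the cocycle condition that the paper leaves implicit.
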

 \noindent Note that $\mu_1$ has even parity.

Next, let $n=3$.  Let 
\[
 z_3:=\begin{bmatrix}
 1&0&0&1&0&1\\
  0&1&0&-1&1&0\\
   0&0&1&0&-1&-1\\
 \end{bmatrix}_{\SL_3(\Z)}.
  \]
 
  \begin{theorem}
(1) There exists a cosharbly cocycle $\mu_3$ such that 
$ \mu_3(z_3)\ne0$.
(2) $H_3(\SL_3(\Z),St)\ne0$.
(3) $H^0(\SL_3(\Z),\Q)\ne0$.
 \end{theorem}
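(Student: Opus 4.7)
Parts (2) and (3) are essentially formal: $H^0$ of any group with trivial $\Q$-coefficients equals $\Q$, and the Borel--Serre isomorphism with $\nu_3-3=0$ then yields $H_3(\SL_3(\Z),St)\iso H^0(\SL_3(\Z),\Q)$, deducing (2) from (3). The substance of the theorem lies in (1), whose purpose is not to reprove (2)/(3) but to produce the explicit sharbly/cosharbly witness $(z_3,\mu_3)$ that the composition machinery of Sections~\ref{comp}--\ref{prod} will bootstrap to larger~$n$.

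My first step would be to verify directly that $z_3$ is a nonzero cycle in $(Sh_3)_{\SL_3(\Z)}$. The six columns $e_1,e_2,e_3,e_1-e_2,e_2-e_3,e_1-e_3$ are representatives of the twelve minimum vectors of the perfect form whose Gram matrix has $2$ on the diagonal and $1$ off-diagonal, which (up to $\SL_3(\Z)$-equivalence) is the unique perfect form in dimension $3$; thus they correspond to the unique top-dimensional cell of the Voronoi/well-rounded decomposition modulo $\SL_3(\Z)$. Non-vanishing of the coinvariant follows once one checks that every stabiliser element of the multiset $\{\pm v_1,\ldots,\pm v_6\}$ lying in $\SL_3(\Z)$ acts by an even signed permutation on the columns, so that relations (i) and (iv) do not collapse $z_3$. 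For the cycle condition, one expands $\partial z_3$ as a signed sum of six basic $2$-sharblies; none vanishes by (ii) since any five of the six vectors span $\Q^3$, so the six terms must pair off via $\SL_3(\Z)$-elements realising the $A_2$ Weyl symmetry together with $v\mapsto -v$, yielding three cancelling pairs by inspection.

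For $\mu_3$ I would invoke the construction of \cite{AGMpuzzle}, which produces the top generator of $H_3(\SL_3(\Z),St)\cong\Q$ as the $\SL_3(\Z)$-invariant functional dual to the unique top cell of the well-rounded retract. Concretely, $\mu_3$ assigns a sign $\pm 1$ to each basic $3$-sharbly in the $\SL_3(\Z)$-orbit of the minimum-vector tuple of the perfect form above, and $0$ to sharblies outside this orbit; that $\mu_3$ is a cocycle is the statement that the dual cell has trivial boundary in the retract, which is part of the \cite{AGMpuzzle} construction.

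The main obstacle is the concrete nonvanishing $\mu_3(z_3)\ne 0$ together with the parity of $\mu_3$ under $s_3=\diag(-1,1,1)$ required by Section~\ref{prod}. For the pairing, $z_3$ is by design a representative of the distinguished top orbit, so the computation reduces to a sign-consistency check: one must verify that the stabiliser of the ordered tuple $(v_1,\ldots,v_6)$ in $\SL_3(\Z)$ acts by $+1$ on the oriented dual cell, which is essentially the same even-permutation check used to show $z_3\ne 0$ in the coinvariants. The parity is then extracted by tracing the action of $s_3\in\GL_3(\Z)\setminus\SL_3(\Z)$ on the oriented cell, a finite sign calculation that records a value $e\in\{0,1\}$; its outcome dictates which of the compositions $[z_1|z_3]$, $[z_3|z_3]$, etc.\ will produce the nonvanishing classes required for Theorem~\ref{main}.
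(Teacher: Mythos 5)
Your handling of (2), (3), and the cycle verification for $z_3$ is consistent with the paper, which at this point simply cites \cite{AGMpuzzle} for both facts and remarks that the cycle condition ``is not hard to do by hand.'' The genuine gap is in your description of $\mu_3$. A cosharbly that assigns $\pm1$ to the basic $3$-sharblies in the single $\SL_3(\Z)$-orbit of the minimum-vector tuple and $0$ to everything else is \emph{not} a cocycle: append a generic vector $w$ to $(e_1,e_2,e_3,e_1-e_2,e_2-e_3,e_1-e_3)$ and evaluate on the boundary of the resulting $7$-tuple. Exactly one face (the one deleting $w$) lies in the distinguished orbit, while for generic $w$ none of the other six faces does, so your functional takes the value $\pm1\ne0$ on this boundary. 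The actual $\mu_3$ of \cite{AGMpuzzle}, as the paper records parenthetically, is (up to a nonzero factor) the volume of the interior of the convex hull of the points $a_i\,{}^ta_i$ in the cone of positive-definite symmetric $3\times3$ matrices; it is nonzero on many orbits, and the cocycle identity comes from additivity of volume across the seven faces of $\partial[v_1,\dots,v_7]$, not from a ``dual cell with trivial boundary.'' With the correct $\mu_3$, the nonvanishing $\mu_3(z_3)\ne0$ is immediate (the convex hull of the six rank-one matrices has nonempty interior), rather than a sign-consistency check on a stabiliser.

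A smaller point: you do not need any sign calculation for the parity of $\mu_3$. Since $3$ is odd, $-I_3$ generates $\GL_3(\Z)/\SL_3(\Z)$ and acts trivially on $Sh_\bullet$, so every cosharbly cocycle for $\SL_3(\Z)$ automatically has even parity; this is exactly the corollary in the paper's Parity section, and it is what licenses the later compositions $[z_3|z_1]$, $[z_3|z_3]$, etc.
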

 \begin{proof}

  In~\cite{AGMpuzzle} it is proven that $z_3$ is a cycle
    (which is not hard to do by hand) and that there exists a sharbly cocycle 
   $\mu_3$ such that $\mu_3(z_3)\ne0$. 
  (Up to a nonzero factor $\mu_3([a_1,\dots,a_6]_{\SL_3(\Z)})$ is the volume of the interior of the convex hull of  
  $\{a_i{}^ta_i\}$ in the cone of positive-definite symmetric matrices.)
   \footnote{In general, one can take the perfect quadratic form called $A_n$, and its minimal vectors $v_1,\dots,v_{\nu_n}$, and form the sharbly
  $z=[v_1,\dots,v_{\nu_n}]_{\SL_n(\Z)}$.    When $n=3$, $z=z_3$.  In~\cite{AGMpuzzle} we show that if $n =2,3$ then $z$ is a sharbly cycle, but this is a low-dimensional accident.  If $n>3$, $z$ is not a sharbly cycle.   The reason for this is that the nearest Voronoi neighbors of the perfect form $A_n$ are all isomorphic to the perfect form $D_n$ if $n>2$.  Now $A_3=D_3$, but if $n>3$ then $A_n$ and $D_n$ are not isomorphic.  This implies that the automorphism group of a facet of the top-dimensional Voronoi cell corresponding to $A_n$ is a subgroup of the 
  automorphism group of $A_n$ itself, which in turn implies that
  $z$ is not a sharbly cycle, since any element in $\SL_n(Z)$ preserves orientation in the top-dimension. 
  }

  Therefore 
 $z_3$ generates $H_3(\SL_3(\Z),St)$.  Then (3) follows by Borel-Serre duality, although, of course, it is trivially true.
 \end{proof}
 \noindent In the next section we will see that $\mu_3$ has even parity.

 \section{Parity}
Recall
that $s_n=\diag(-1,1,\dots,1)\in \GL_n(\Z)$.  The 2-element group $\GL_n(\Z)/ \SL_n(\Z)$ is generated by $s_n$.  It acts on the space of cosharblies by $(s_n\mu)(z)=\mu(s_nz)$.
\begin{lemma}
Let $z$ be a sharbly cycle, $\mu$ a cosharbly cocycle, such that $\mu(z)\ne0$.
Let $\epsilon=1$ or $-1$ and suppose that $s_nz=\epsilon z$.
Then $\mu$ has even parity if $\epsilon=1$ and odd parity if $\epsilon=-1$.
\end{lemma}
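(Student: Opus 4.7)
The plan is to turn the statement into a one-line calculation comparing two evaluations of $\mu(s_n z)$, modulo a small preliminary clarification about how parity enters.

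First, I would observe that $s_n$ normalizes $\SL_n(\Z)$, so it acts on $Sh_\bullet \otimes_{\SL_n(\Z)} \Q$ and dually on $\SL_n(\Z)$-invariant cosharblies via $(s_n\mu)(x) = \mu(s_n x)$; moreover, this action commutes with $\partial$. Any cosharbly cocycle $\mu$ decomposes as $\mu = \mu^+ + \mu^-$ with $\mu^\pm = \tfrac{1}{2}(\mu \pm s_n\mu)$, where $\mu^+$ has even parity and $\mu^-$ has odd parity, and both remain cocycles. Using the hypothesis $s_n z = \epsilon z$, one computes
\[
\mu^\pm(z) \;=\; \tfrac{1}{2}\bigl(\mu(z) \pm \mu(s_n z)\bigr) \;=\; \tfrac{1}{2}(1 \pm \epsilon)\,\mu(z).
\]
Hence the piece of $\mu$ that pairs nontrivially with $z$ is $\mu^+$ when $\epsilon = 1$ and $\mu^-$ when $\epsilon = -1$, so it is harmless to assume from the outset that $\mu$ itself has a single parity $e \in \{0,1\}$.

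Second, I would carry out the core computation. By linearity and the definition of parity applied basic-sharbly by basic-sharbly in a representative of $z$,
\[
\mu(s_n z) \;=\; (-1)^e\,\mu(z).
\]
On the other hand, using $s_n z = \epsilon z$ and the linearity of $\mu$ directly,
\[
\mu(s_n z) \;=\; \epsilon\,\mu(z).
\]
Since $\mu(z) \neq 0$ by hypothesis, these two expressions force $(-1)^e = \epsilon$, giving $e = 0$ when $\epsilon = 1$ and $e = 1$ when $\epsilon = -1$.

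The only subtlety lies in interpreting the hypothesis so that parity is meaningful, which is resolved by the averaging step above, together with the easy check that the parity relation on basic sharblies in $Sh_k$ descends to the coinvariants $(Sh_k)_{\SL_n(\Z)}$ (this is immediate because $s_n$ normalizes $\SL_n(\Z)$, so conjugation by $s_n$ permutes the relators in $C_{n,k}(\SL_n(\Z))$). With those preliminaries settled there is no real obstacle; the content of the lemma is the one-line cancellation above.
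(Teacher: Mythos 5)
Your proof is correct and takes essentially the same route as the paper's: both decompose $\mu=\mu^++\mu^-$ with $\mu^\pm=\tfrac12(\mu\pm s_n\mu)$ and then compare the two evaluations of $\mu^\pm(s_nz)$ to force $(-1)^e=\epsilon$. The only difference is cosmetic: where you say it is ``harmless to assume $\mu$ has a single parity,'' the paper asserts $\mu^-=0$ outright, though both arguments literally establish only $\mu^\mp(z)=0$, so in either version the lemma is best read as identifying the parity of the homogeneous component of $\mu$ that detects $z$.
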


\begin{proof}
Let $\mu^+=(\mu+s_n\mu)/2$ and $\mu^-=(\mu-s_n\mu)/2$, so that $\mu^+$ has even parity, $\mu^-$ has odd parity, 
and $\mu^++\mu^-=\mu$.  Then either
 $\mu^+(z)\ne0$ or  $\mu^-(z)\ne0$ or both.
Suppose   $\mu^+(z)\ne0$.  Then
$\mu^+(z)=\mu^+(s_n z)=\epsilon \mu^+(z)$.  It follows that 
$\epsilon =1$.
Then  $\mu^-(z)=-\mu^-(s_n z)=-\epsilon \mu^-(z)
=-\mu^-(z)$ so that $\mu^-=0$.  It follows that $\mu=\mu^+$ has even parity.  The argument if $\mu^-(z)\ne0$ is similar.
\end{proof}

\begin{corollary}
If $n$ is odd, any sharbly cocycle $\mu$ has even parity.
\end{corollary}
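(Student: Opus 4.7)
The plan is to show that, when $n$ is odd, the involution $s_n$ on $Sh_k$ actually agrees with the action of an element of $\SL_n(\Z)$, so that $\SL_n(\Z)$-invariance of $\mu$ automatically forces $\mu(s_n x)=\mu(x)$. In particular, I would not use the preceding lemma at all (which requires a nonzero pairing with a cycle), and instead work directly from the relations of Definition \ref{sh}.

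First I would establish the key auxiliary fact that $-I \in \GL_n(\Z)$ acts trivially on $Sh_k$. By definition,
\[
(-I)[v_1,\dots,v_{n+k}] = [-v_1,\dots,-v_{n+k}].
\]
Using relation (iii) we may replace $v_1$ by $-v_1$, and combining (i) (transposition) with (iii) we may similarly flip the sign on each subsequent $v_i$. Stripping off the signs one at a time yields $[v_1,\dots,v_{n+k}]$, so $(-I)\cdot x = x$ in $Sh_k$ for every $x$.

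Next I would consider the matrix $g := -s_n = \diag(1,-1,-1,\dots,-1) \in \GL_n(\Z)$. Its determinant is $(-1)^{n-1}$, which equals $1$ since $n$ is odd, so $g \in \SL_n(\Z)$. Because $g = (-I)\cdot s_n$ and $-I$ acts trivially on sharblies, we have $g\cdot x = s_n\cdot x$ in $Sh_k$ for every $x$. Since $\mu$ is by definition $\SL_n(\Z)$-invariant,
\[
\mu(s_n \cdot x) = \mu(g\cdot x) = \mu(x),
\]
which is exactly the statement that $\mu$ has parity $e=0$.

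There is no real obstacle here; the only point that needs care is the reduction $(-I)x = x$, but this is a routine consequence of relations (i) and (iii). The whole argument is driven by the simple determinant computation $\det(-s_n)=(-1)^{n-1}$, which places $-s_n$ inside $\SL_n(\Z)$ precisely when $n$ is odd.
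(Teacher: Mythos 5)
Your proposal is correct and is essentially the paper's own argument: the paper likewise observes that $-I_n$ acts trivially on $Sh_\bullet$ and that, for $n$ odd, $s_n$ and $-I_n$ lie in the same coset of $\SL_n(\Z)$ in $\GL_n(\Z)$ (equivalently, your $-s_n\in\SL_n(\Z)$), so $s_n$ acts trivially on the coinvariants. Your explicit verification that $(-I)x=x$ via relations (i) and (iii) just fills in a step the paper takes for granted.
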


\begin{proof}
Since $n$ is odd, $-I_n$ generates $\GL_n(\Z)/ \SL_n(\Z)$.  But $-I_n$ acts trivially on $Sh_\bullet$ and hence trivially on 
$(Sh_\bullet)_{\SL_n(\Z)}$.  Therefore $s_n$
acts trivially on $(Sh_\bullet)_{\SL_n(\Z)}$.
\end{proof}

 \section{Next step: $n=4$}
 Continuing the notation of the preceding section, let 
 $z_4=[z_3|z_1]_{\SL_4(\Z)}$ and $\mu_4=\mu_3\times\mu_1$.  Since both $\mu_3$ and $\mu_1$ have even parity, $\mu_4$ makes sense.
 \begin{theorem}
(1) $ \mu_4(z_4)\ne0$.
(2) $H_3(\SL_4(\Z),St)\ne0$.
(3) $H^3(\SL_4(\Z),\Q)\ne0$.
 \end{theorem}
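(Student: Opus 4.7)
The plan is to deduce (1) by direct unpacking of the definition of $\mu_3\times\mu_1$ applied to $[z_3|z_1]_{\SL_4(\Z)}$, and then obtain (2) and (3) as formal consequences. First I check that the ingredients are legal: $z_3\in(Sh_3)_{\SL_3(\Z)}$ and $z_1\in(Sh_0)_{\SL_1(\Z)}$ are cycles (by Theorem~5.1 and Theorem~5.2), so Theorem~3.3(2) gives that $z_4=[z_3|z_1]_{\SL_4(\Z)}\in(Sh_3)_{\SL_4(\Z)}$ is a cycle. Since $1$ and $3$ are odd, the Corollary of Section~6 forces $\mu_1$ and $\mu_3$ both to have even parity, so Theorems~4.7 and 4.8 produce the well-defined $3$-cosharbly cocycle $\mu_4=\mu_3\times\mu_1$ for $\SL_4(\Z)$.

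The core of the proof is the computation of $\mu_4(z_4)$. By definition, this is the sum over pliable $6$-subsets $S$ of the seven columns $v_1,\dots,v_6,v_7$ of $z_4$ (where $v_1,\dots,v_6$ are the columns of $z_3$ sitting in $U$ via $f$, and $v_7=e_4\in V$). A $6$-subset is pliable iff it spans a $3$-dimensional subspace. The subset $\{v_1,\dots,v_6\}\subset U$ spans $U$, hence is pliable. Any other $6$-subset contains $v_7=e_4\notin U$ together with $5$ of the columns of $z_3$. The key observation is that \emph{every} choice of $5$ columns of $z_3$ already spans $\Q^3$: the columns are $e_1,e_2,e_3,e_1-e_2,e_2-e_3,e_1-e_3$, and dropping any one of $e_1,e_2,e_3$ still lets us recover it from the remaining two basis vectors plus the appropriate difference, while dropping one of the differences leaves $e_1,e_2,e_3$ untouched. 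Thus any such $6$-subset spans $U+\Q e_4=\Q^4$, which is $4$-dimensional, not $3$, so not pliable.

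Consequently there is a unique pliable $6$-subset, namely $\{v_1,\dots,v_6\}$. Choose $\sigma=\mathrm{id}$ and $\gamma=I_4$; then $\gamma$ plies $S$, and after identifying $U$ with $\Q^3$ via $f$ we get
\[
\mu_4(z_4)=\mu_3([v_1,\dots,v_6]_{\SL_3(\Z)})\cdot\mu_1([\pi(e_4)]_{\SL_1(\Z)})=\mu_3(z_3)\cdot\mu_1(z_1),
\]
which is nonzero by Theorems~5.1 and~5.2. This proves (1). Statement (2) then follows from the Corollary at the end of Section~4 (or directly from Lemma~2.7), which produces a nontrivial class of $z_4$ in $H_3(\SL_4(\Z),St)$. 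Finally (3) follows from the Borel--Serre isomorphism $H_3(\SL_4(\Z),St)\cong H^{\nu_4-3}(\SL_4(\Z),\Q)=H^3(\SL_4(\Z),\Q)$.

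The only real obstacle is the combinatorial verification that no $6$-subset containing $e_4$ is pliable, and this is short because of the observation that every $5$-subset of the columns of $z_3$ spans $\Q^3$. Everything else is a bookkeeping assembly of Theorems~3.3, 4.7, 4.8 and the Corollary at the end of Section~4.
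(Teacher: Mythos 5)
Your proof is correct and follows essentially the same route as the paper: identify the unique pliable $6$-subset (the columns of $z_3$), conclude $\mu_4(z_4)=\mu_3(z_3)\mu_1(z_1)\ne0$, and deduce (2) and (3) from the corollary on composed cycles and Borel--Serre duality. The only difference is that you spell out the verification that every $5$-subset of the columns of $z_3$ spans $\Q^3$ (so no $6$-subset containing $e_4$ is pliable), which the paper merely asserts.
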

 
 \begin{proof}
 (1) Compute
 \[
 x=
 (\mu_3\times\mu_1)\left(
 \begin{bmatrix}
 1&0&0&1&0&1&0\\
0&1&0&-1&1&0&0\\
0&0&1&0&-1&-1&0\\
0&0&0&0&0&0&1
 \end{bmatrix}
 \right).
 \]
 The only pliable set of 6 vectors on the right hand side is that consisting of the first 6.  All other sets of 6 vectors span a space of dimension 4.  So
 \[
 x=
 \mu_3(z_3)\mu_1(z_1)\ne0.
 \]
 (2) Because $z_1$ represents a class in $H_0(\SL_1(\Z),St)$ and 
 $z_3$ represents a class in $H_3(\SL_3(\Z),St)$, it follows that
 $z_4$ represents a class in $H_{0+3}(\SL_4(\Z),St)$.
 Then (3) follows by Borel-Serre duality.
 \end{proof}
 
\begin{remark}
(1) This result is consonant with the known fact that
$H^i(\SL_4(\Z),\Q)$ equals 0 unless $i=0,3$, in which cases it is one-dimensional.  (2) With more trouble, one could show in a similar way that 
$(\mu_1\times\mu_3)[z_1|z_3]_{\SL_4(\Z)}\ne0$, but by the result just referred to, this would not give anything new in cohomology.  In fact, by Theorem~\ref{thm:comp} (3), 
$[z_1|z_3]_{\SL_4(\Z)}=-[z_3|z_1]_{\SL_4(\Z)}$.
\end{remark}

\section{Some cases that don't compose well: $n=2,5$}
Let $n=2$ and
\[
 z_2:=\begin{bmatrix}
 1&0&1\\
  0&1&-1\\
 \end{bmatrix}_{\SL_2(\Z)}.
  \]
  In~\cite{AGMpuzzle} it is proven that $z_2$ is a cycle (which is not hard to do by hand) and that there exists a sharbly cocycle $\mu_2$ (again essentially the volume) such that $\mu_2(z_2)\ne0$. 
  
  But there is a big difference between $n=2$ and $n=3$, namely that $2$ is even.  In fact, $z_2$ has odd parity, so that if it is composed with any even parity sharbly cycle, the result is 0.  For example,   $[z_2|z_3]_{\SL_5(\Z)}=0$, which had better be the case, because $H^6(\SL_5(\Z),\Q)=0$.
  
  To see that $z_2$ has even parity:
  \begin{gather*}
  s_2z_2=
  \begin{bmatrix}
 -1&0&-1\\
  0&1&-1\\
 \end{bmatrix}_{\SL_2(\Z)}=
 \begin{bmatrix}
 1&-1\\
 0&1\\
 \end{bmatrix}
 \begin{bmatrix}
 1&0&1\\
  0&1&1\\
 \end{bmatrix}_{\SL_2(\Z)}=\\
 \begin{bmatrix}
 1&-1&0\\
  0&1&1\\
 \end{bmatrix}_{\SL_2(\Z)}=
 -z_2.
  \end{gather*}
  
We also get 0 if we compose $z_2$ with itself.  This is because $z_2$ has 3 columns and 3 is an odd number.  The permutation that switches the first 3 columns and the last 3 columns of 
  $[z_2|z_2]_{\SL_4(\Z)}$ is odd, so it multiplies the value of the symbol by -1.  But multiplying  $[z_2|z_2]_{\SL_4(\Z)}$ on the right by the unimodular matrix
\[
\begin{bmatrix}
0&I_2\\
 I_2&0\\
 \end{bmatrix}
\]
also switches the first 3 columns and the last 3 columns and doesn't change the value of the sharbly.  So the sharbly must be equal 0.

It is known that $H^i(\SL_5(\Z),\Q)$ is 0 unless $i=0,5$ in which cases it is one-dimensional.  Therefore it should happen 
that $[z_4|z_1]_{\SL_4(\Z)}=0$, for otherwise we would obtain nonvanishing of $H_{3}(\SL_5(\Z),St))\approx H^7(\SL_5(\Z),\Q)$.
Indeed, we can see easily that $[z_4|z_1]_{\SL_4(\Z)}=0$.  Using Theorem~\ref{thm:comp} (3) we see that
\[
[z_4|z_1]_{\SL_4(\Z)}=[z_3|z_1|z_1]_{\SL_4(\Z)}=-[z_3|z_1|z_1]_{\SL_4(\Z)}
=-[z_4|z_1]_{\SL_4(\Z)}.
\]
More generally, if we compose any sharbly cycle with $z_1$ and compose the result with $z_1$, we obtain 0.

 \section{Next cases: $n=6,7$}
Since $z_4$ has 7 columns, which is an odd number, if we compose $z_4$ with itself we get 0.  So these methods will not obtain any nonzero classes for $\SL_8(\Z)$.
 We can however compose $z_3$ with itself or with $z_4$.
 
 To go further it is helpful to make the following definitions.  Angle brackets denote $\Q$-span and $S_k(A)$ denotes the set of all subsets of the set $A$ with $k$-elements.
 \begin{definition}
 Let $A$ be a set of nonzero vectors in $\Q^n$ of size $p$.  The \emph{depth-chart} of $A$ is the function 
$
 d_A:\Z\to\Z
$
 defined by 
 \[
 d_A(k)= \min_{S\in S_k(A)} \dim <S>
 \]
  if
 $0\le k\le p$ and $=0$ otherwise.
 \end{definition}
 Note that $d_A(k)$ is a weakly monotonic increasing function of $k$.
 
  \begin{definition}
 Let $A$ and $B$ be nonempty subsets of nonzero vectors in $\Q^n$ with $p$ and $q$ elements respectively. Then if $0\le i\le p$,
 \[
 A_i(B)
 \]
 denotes the set of sets obtained from $A$ by replacing exactly $i$ of its elements by $i$ pairwise distinct elements of $B$.  If $i<0$ or $i>p$, then it denotes the empty set.
   \end{definition}
 
 The proof of the following lemma is clear:
 \begin{lemma}\label{depth}
 Let $\Q^n=X\oplus Y$,
and suppose $A\subset X-\{0\}$ has $p$ elements, $B\subset Y-\{0\}$ has $q$ elements.  Then
\[
d_A(p-i)+d_B(i)= \min_{S\in A_i(B)} \dim S
\]
and
\[
d_{A\cup B}(k)=\min_{0\le \alpha \le k} (d_A(k-\alpha)+d_B(\alpha)).
\]
  \end{lemma}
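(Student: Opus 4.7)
The plan is to exploit the direct sum decomposition $\Q^n=X\oplus Y$: since $X\cap Y=0$, for any $U\subseteq X$ and $V\subseteq Y$ we have $\langle U\cup V\rangle=\langle U\rangle\oplus\langle V\rangle$, hence
\[
\dim\langle U\cup V\rangle=\dim\langle U\rangle+\dim\langle V\rangle.
\]
This single identity is the only ingredient I will need; both equalities then reduce to routine, independent minimizations.

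For the first equation, I would observe that each element $S\in A_i(B)$ has a unique decomposition $S=A'\cup B'$ with $A'\subseteq A$ of cardinality $p-i$ and $B'\subseteq B$ of cardinality $i$ (forced by $A\subset X-\{0\}$ and $B\subset Y-\{0\}$). The dimension identity gives $\dim\langle S\rangle=\dim\langle A'\rangle+\dim\langle B'\rangle$, and since the choices of $A'$ and $B'$ are independent the minimum of the sum splits as the sum of the minima, yielding $d_A(p-i)+d_B(i)$.

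For the second equation, the same decomposition applies to any $S\in S_k(A\cup B)$: write $S=S_A\cup S_B$ with $S_A\subseteq A$, $S_B\subseteq B$, and set $\alpha=|S_B|$ so that $|S_A|=k-\alpha$. The dimension identity again reduces $\dim\langle S\rangle$ to a sum, and minimizing first over $(S_A,S_B)$ with $\alpha$ fixed and then over $\alpha$ produces $\min_\alpha(d_A(k-\alpha)+d_B(\alpha))$. The only mild subtlety is ensuring one ranges $\alpha$ over the genuinely admissible values $\max(0,k-p)\le\alpha\le\min(k,q)$ so that both $S_{k-\alpha}(A)$ and $S_\alpha(B)$ are nonempty; under the paper's convention that $d_A, d_B$ vanish outside their natural range (or if one interprets the $\min$ as being over valid $\alpha$), the formula as written is correct.

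I expect no real obstacle: the statement is purely combinatorial and the direct sum structure makes each side of each equation manifestly a sum of two independent optimizations. The proof is essentially unpacking the definitions together with the additivity of dimension across a direct sum.
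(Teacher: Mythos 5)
The paper gives no proof of this lemma (it is declared clear), and your argument---additivity of dimension across the direct sum $X\oplus Y$ together with the observation that the two minimizations are independent---is precisely the intended one. Your caveat about restricting $\alpha$ to the admissible range $\max(0,k-p)\le\alpha\le\min(k,q)$ is well taken (with the convention that $d$ vanishes outside its natural range, the second formula would literally fail for $k>p$, e.g.\ $k=p+q$, $\alpha=0$ gives a spurious term equal to $0$), but this does not affect the values of $k$ and $\alpha$ actually used in the paper's later applications.
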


   \begin{lemma}\label{key}
Suppose $z_1=[v_1,\dots,v_k]_{\SL_a(\Z)}$ 
  is a basic sharbly for $\SL_a(\Z)$, and  $z_2=[w_1,\dots,w_m]_{\SL_b(\Z)}$ basic sharbly for $\SL_b(\Z)$.  
  Let $z=[z_1|z_2]_{SL_n(\Z)}$, where $n=a+b$.  Let $A$ be the set of vectors to the left of the vertical bar, and $B$ the set to the right.

If for $i=1,\dots,k-1$
\[
 \min_{S\in A_i(B)} \dim S > a
\]
then the only possible pliable sets of vectors in $A\cup B$ are $A$ and subsets of $B$.
    \end{lemma}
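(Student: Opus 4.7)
My plan is a direct case analysis on how a pliable subset $T\subseteq A\cup B$ of cardinality $k$ intersects the two halves $A\subseteq U$ and $B\subseteq V$. The restriction $|T|=k$ is the case that matters in the $\mu\times\nu$ formula, where the sum runs over pliable subsets of that exact size, and is the form of the statement the hypothesis about $A_i(B)$ is tailored to (every element of $A_i(B)$ has cardinality $k$).

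Set $A'=T\cap A$ and $B'=T\cap B$. Because $|A|=k=|T|$, there is a unique $i\in\{0,1,\ldots,k\}$ with $|A'|=k-i$ and $|B'|=i$. Since $\Q^n=U\oplus V$ with $A'\subseteq U$ and $B'\subseteq V$, I get the key identity
\[
\dim\langle T\rangle=\dim\langle A'\rangle+\dim\langle B'\rangle,
\]
so pliability of $T$ is precisely the requirement that this sum equal $a$.

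I would then dispatch the three exhaustive cases separately. If $i=0$, then $A'=A$ and $B'=\emptyset$, so $T=A$; if $i=k$, then $A'=\emptyset$ and $T=B'\subseteq B$. Otherwise $1\le i\le k-1$, and by construction $T$ lies in $A_i(B)$ (it is obtained from $A$ by replacing $i$ of its elements by $i$ distinct elements of $B$); the hypothesis then forces $\dim\langle T\rangle>a$, contradicting pliability. Only the two extremal values of $i$ survive, which is the conclusion of the lemma.

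There is no real obstacle once the definitions are unwound: the lemma is essentially a tautology saying that the hypothesis was designed to kill exactly the intermediate splits. The only point that needs care is the observation that the pliable sets under discussion have the ``matching size'' $k$, so that $|A'|+|B'|=k$ lines up with the cardinality constraint $|A_i(B)|=k$ in the hypothesis. If one wished, Lemma \ref{depth} could be invoked to rewrite the hypothesis as $d_A(k-i)+d_B(i)>a$ for $i=1,\ldots,k-1$, but the argument above does not require this repackaging.
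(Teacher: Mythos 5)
Your proof is correct and follows essentially the same route as the paper's: decompose a pliable set according to how many vectors it draws from each side of the bar, observe that every intermediate split lies in some $A_i(B)$ with $1\le i\le k-1$ and is therefore killed by the hypothesis, and conclude that only the extremal cases $T=A$ and $T\subseteq B$ survive. You are in fact slightly more careful than the paper in making explicit the restriction to cardinality-$k$ subsets (the only case relevant to the $\mu\times\nu$ formula), which the paper's one-line argument leaves implicit.
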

  
   \begin{proof}
A pliable set of vectors for $z$ is a subset $C$ of  $A\cup B$ of dimension $a$.  View $C$ as made of $i$ vectors from $A$ and $k-i$ vectors from $B$.  Then the set of candidates for $C$ is $A_i(B)$.  If $i\ne0,k$, then
none of these $C$'s have dimension $a$, and the lemma follows.
  \end{proof}

 The following theorems in this and the next section are compatible with the computations mentioned in the introduction.
  \begin{theorem}\label{n6}
(1) $(\mu_3\times\mu_3)([z_3|z_3]_{\SL_6(\Z)})\ne0$.
(2) $H_6(\SL_6(\Z),St)\ne0$.
(3) $H^9(\SL_6(\Z),\Q)\ne0$.
 \end{theorem}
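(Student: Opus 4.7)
The plan is to establish (1) by direct evaluation of $(\mu_3\times\mu_3)([z_3|z_3]_{\SL_6(\Z)})$, and then deduce (2) and (3) as follows. Since $\mu_3$ has even parity (automatic by the Corollary in the Parity section since $n=3$ is odd), $\mu_3\times\mu_3$ is a well-defined cosharbly by Theorem \ref{times1} and a cocycle by Theorem \ref{times2}; meanwhile $[z_3|z_3]_{\SL_6(\Z)}$ is a cycle by Theorem \ref{thm:comp}(2). Thus (1) implies, via the corollary to Theorem \ref{times2}, that $[z_3|z_3]_{\SL_6(\Z)}$ represents a nonzero class in $H_{3+3}(\SL_6(\Z),St)=H_6(\SL_6(\Z),St)$, giving (2); statement (3) then follows from Borel-Serre duality $H^{\nu_6-6}(\SL_6(\Z),\Q)\cong H_6(\SL_6(\Z),St)$ with $\nu_6=15$.

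To prove (1), the first step is to identify the pliable subsets of size $k=6$ in $A\cup B$, where $A$ and $B$ denote respectively the six $f$-images and six $g$-images of the columns $v_1=e_1$, $v_2=e_2$, $v_3=e_3$, $v_4=e_1-e_2$, $v_5=e_2-e_3$, $v_6=e_1-e_3$ of $z_3$. The crucial observation is the relation $v_4+v_5=v_6$, which forces the depth-chart to satisfy $d_A(3)=2$, while $d_A(1)=1$, $d_A(2)=2$, and $d_A(j)=3$ for $j\ge 4$; the same holds for $d_B$. By Lemma \ref{depth}, for $i=1,\dots,5$ the quantity $d_A(6-i)+d_B(i)$ takes values $4,5,4,5,4$ respectively, all strictly greater than $a=3$, so Lemma \ref{key} shows that $A$ and $B$ are the only pliable subsets of size $6$.

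Evaluating the two surviving terms: for $S=A$, choose $\gamma=I_6$ and $\sigma$ the identity (sign $+1$); since $\pi(g(w_i))=w_i$, both the first and second factors equal $\mu_3(z_3)$, so this term contributes $+\mu_3(z_3)^2$. For $S=B$, take $\gamma=\begin{bmatrix}0&-I_3\\ I_3&0\end{bmatrix}$, which lies in $\SL_6(\Z)$ (determinant $+1$) and plies $B$, and take $\sigma$ the $(6,6)$-shuffle sending $(1,\dots,6)$ to $(7,\dots,12)$ in order, whose sign is $(-1)^{36}=+1$. Then $\gamma g(w_i)=-f(w_i)$, so the first factor is $\mu_3([-w_1,\dots,-w_6]_{\SL_3(\Z)})=\mu_3(z_3)$ after absorbing each minus sign via sharbly relation (iii); since $\gamma f(v_i)=g(v_i)$, the second factor also equals $\mu_3(z_3)$, contributing another $+\mu_3(z_3)^2$. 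The total is therefore $2\mu_3(z_3)^2\ne 0$. The main obstacle is the sign bookkeeping for the $S=B$ term, ensuring the two contributions add rather than cancel: one must verify $\det\gamma=+1$ so $\gamma\in\SL_6(\Z)$, check that the block-swap shuffle has sign $+1$ because both blocks have even size $6$, and see that the minus signs from $\gamma g(w_i)=-f(w_i)$ are absorbed by relation (iii) rather than accumulating.
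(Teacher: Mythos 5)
Your proposal is correct and follows essentially the same route as the paper: the depth-chart computation via Lemmas \ref{depth} and \ref{key} to isolate $A$ and $B$ as the only pliable $6$-element subsets, followed by evaluation of the two resulting terms with the same $\gamma=\bigl[\begin{smallmatrix}0&-I_3\\ I_3&0\end{smallmatrix}\bigr]$, yielding $2\mu_3(z_3)^2\ne0$. Your extra care with the sign bookkeeping (determinant of $\gamma$, sign of the block-swap shuffle, absorbing the minus signs via relation (iii)) is consistent with, and slightly more explicit than, the paper's argument.
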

 
 \begin{proof}
 As before, (2) and (3) follow from (1).  For
 (1), let $E$ 
 be the columns of 
 \[
\begin{bmatrix}
 1&0&0&1&0&1\\    
0&1&0&-1&1&0 \\  
0&0&1&0&-1&-1
\end{bmatrix}. 
\]
Then the depth-chart of $E$ is 
\[
\begin{tabular}{l || lllllll }
 $i$& 0 &1  &2  &3&4  &5  &6  \\   \hline
 $d_E(i)$ & 0 &1  &2  &2 &3  &3  &3 \\
\end{tabular}
\]

Let $A$ be the columns of 
 \[
\begin{bmatrix}
 1&0&0&1&0&1\\    
0&1&0&-1&1&0 \\  
0&0&1&0&-1&-1\\ 
 0&0&0&0&0&0\\   
  0&0&0&0&0&0\\ 
   0&0&0&0&0&0\\ 
 \end{bmatrix}
 \] and 
 let $B$ be the columns of 
 \[
\begin{bmatrix}
 0&0&0&0&0&0\\   
  0&0&0&0&0&0\\ 
   0&0&0&0&0&0\\
 1&0&0&1&0&1\\    
0&1&0&-1&1&0 \\  
0&0&1&0&-1&-1\\ 
 \end{bmatrix}
 \] 
 The depth-charts of $A$ and $B$ are the same as the depth-chart of $E$.  By Lemma~\ref{depth}, 
 \[
\delta_i:= \min_{S\in A_i(B)} \dim S = d_A(6-i)+d_B(i)=d_E(6-i)+d_E(i).
\]
So
\[
\begin{tabular}{l || lllllll }
 $i$& 0 &1  &2  &3&4  &5  &6  \\   \hline
 $d_E(6-i)$ & 3 &3  &3  &2 &2  &1  &0 \\
  $d_E(i)$ & 0 &1  &2  &2 &3  &3  &3 \\
$\delta_i$ & 3 &4  &5  &4 &5  &4  &3 \\
\end{tabular}
\]
Apply Lemma~\ref{key} with $k=6$ and $a=3$, we see that the only pliable subsets of vectors for $[z_3|z_3]_{\SL_6(\Z)}$ are $A$ and $B$.
Now $[z_3|z_3]_{\SL_6(\Z)}=[A,B]_{\SL_6(\Z)}$. 
Therefore, in the formula for 
$(\mu_3\times\mu_3)([z_3|z_3]_{\SL_6(\Z)})$ there are two terms. The first term, corresponding to the trivial permutation,
is $\mu_3(z_3)\mu_3(z_3)$, which is a nonzero number, call it $t$.

 The other term corresponds to the shuffle permutation 
that switches $A$ and $B$, which is even, because they each have 6 vectors and 6 is even.  Let 
\[
\gamma=\begin{bmatrix}
0&-I_3\\
I_3&0\\
\end{bmatrix}.
\]
Then the term in question equals
\[
\mu_3([\gamma B]_{\SL_3(\Z)})\mu_3([\pi(\gamma A)]_{\SL_3(\Z)}) =
\mu_3([A]_{\SL_3(\Z)})\mu_3([\pi B]_{\SL_3(\Z)})
=\mu_3(z_3)\mu_3(z_3).
\]
The result is again $t$.  So 
$(\mu_3\times\mu_3)([z_3|z_3]_{\SL_6(\Z)})=2t\ne0$.
\end{proof}

 \begin{theorem}\label{n7}
(1) $(\mu_3\times\mu_4)[z_3|z_4]_{\SL_6(\Z)})\ne0$.
(2) $H_6(\SL_7(\Z),St)\ne0$.
(3) $H^{15}(\SL_7(\Z),\Q)\ne0$.
 \end{theorem}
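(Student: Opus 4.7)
My strategy mirrors that of Theorem~\ref{n6}: I use Lemmas~\ref{depth} and~\ref{key} to reduce the sum defining $(\mu_3\times\mu_4)([z_3|z_4]_{\SL_7(\Z)})$ to a small number of explicit surviving terms, then show they add to something nonzero. Before starting I must confirm $\mu_3$ and $\mu_4$ have the same parity so that the product is defined. The cocycle $\mu_3$ is even by the odd-$n$ corollary in the Parity section. For $\mu_4=\mu_3\times\mu_1$, the plan is to show $s_4 z_4=z_4$ in the coinvariants: since $s_4$ preserves the splitting $U\oplus V$ and restricts to $s_3\oplus\mathrm{id}$, one has $s_4 z_4=[s_3 z_3|z_1]_{\SL_4(\Z)}$, and a direct check (conjugating by $\diag(1,-1,-1)\in\SL_3(\Z)$ and using relator (iii) to rescale columns by $-1$) gives $s_3 z_3=z_3$ in $(Sh_3)_{\SL_3(\Z)}$. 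Replacing $\mu_4$ by its $s_4$-symmetric part if necessary (which does not disturb $\mu_4(z_4)\ne0$), the Parity Lemma makes $\mu_4$ even.

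For the depth-chart step, set $A$ to be the six columns of $z_3$ embedded in $U\subset\Q^7$ and decompose the seven columns of $z_4$ in $V\subset\Q^7$ as $B=B_1\cup\{e_7\}$, with $B_1$ the embedded $z_3$. The depth chart of $A$ matches that of the set $E$ in Theorem~\ref{n6}, namely $(0,1,2,2,3,3,3)$, while Lemma~\ref{depth} applied to $B_1\cup\{e_7\}$ yields $d_B=(0,1,2,2,3,3,3,4)$. The quantities $\delta_i=d_A(6-i)+d_B(i)$ then come out as $(3,4,5,4,5,4,3)$ for $i=0,\dots,6$; since $\delta_i>3$ for $1\le i\le 5$, Lemma~\ref{key} restricts the pliable $6$-subsets of $A\cup B$ to $A$ itself and $6$-subsets of $B$. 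Among the latter, only $B_1$ is pliable: any other $6$-subset of $B$ contains $e_7$ together with five columns of $z_3$, which still span a $3$-dimensional subspace, for a total dimension of $4$.

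Two terms thus survive. The term $S=A$, using $\sigma=\mathrm{id}$ and $\gamma=I_7$, evaluates directly to $\mu_3(z_3)\mu_4(z_4)$. The term $S=B_1$ uses the $(6,7)$-shuffle $\sigma$ that pulls positions $7,\dots,12$ past positions $1,\dots,6$ while fixing $13$; this has $6\cdot 6=36$ inversions, so $\sign(\sigma)=+1$. For $\gamma$ I will take a block signed permutation in $\SL_7(\Z)$ that exchanges the first three coordinates with coordinates $4,5,6$ (negating one block to enforce $\det=1$) and fixes $e_7$. It sends $B_1$ onto the $z_3$-embedding in $U$ and sends each vector of $A$ to minus its image in coordinates $4,5,6$; six applications of relator (iii) absorb the six minus signs, so this term also equals $\mu_3(z_3)\mu_4(z_4)$. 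Thus $(\mu_3\times\mu_4)([z_3|z_4]_{\SL_7(\Z)})=2\mu_3(z_3)\mu_4(z_4)\ne0$, proving (1). Parts (2) and (3) then follow from the Corollary of Section~\ref{prod} and Borel--Serre duality, using $\nu_7-6=15$. The fussiest step will be the sign bookkeeping in the second term, where the shuffle sign, the minuses introduced by $\gamma$, and the rescalings from relator (iii) must conspire to give $+1$ rather than $-1$; it is also the parity check for $\mu_4$ that lets the product be formed in the first place, without which the entire pairing would collapse.
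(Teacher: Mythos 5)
Your proof is correct and follows essentially the same route as the paper's: the same depth-chart computation via Lemma~\ref{depth}, the same application of Lemma~\ref{key} isolating $A$ and the first six columns of $B$ as the only pliable sets, and the same evaluation of the two surviving terms (even shuffle, block $\gamma$ with signs absorbed by relator (iii)) to get $2\mu_3(z_3)\mu_4(z_4)\ne0$. Your preliminary verification that $\mu_4$ has (or can be symmetrized to have) even parity is a point the paper leaves implicit, and is a worthwhile addition rather than a deviation.
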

 
 \begin{proof}
 As before, (2) and (3) follow from (1).  For
 (1),  let $F$ 
 be the columns of 
 \[
\begin{bmatrix}
 1&0&0&1&0&1&0\\    
0&1&0&-1&1&0&0 \\  
0&0&1&0&-1&-1&0\\
0&0&0&0&0&0&1\\
\end{bmatrix}. 
\]
Then the depth-chart of $F$ is 
\[
\begin{tabular}{l || llllllll }
 $i$& 0 &1  &2  &3&4  &5  &6 &7 \\   \hline
 $d_F(i)$ & 0 &1  &2  &2 &3  &3  &3&4 \\
\end{tabular}
\]

Let $A$ be the columns of 
 \[
\begin{bmatrix}
 1&0&0&1&0&1\\    
0&1&0&-1&1&0 \\  
0&0&1&0&-1&-1\\ 
 0&0&0&0&0&0\\   
  0&0&0&0&0&0\\ 
   0&0&0&0&0&0\\ 
 \end{bmatrix}
 \] and 
 let $B$ be the columns of 
 \[
\begin{bmatrix}
 0&0&0&0&0&0&0\\   
  0&0&0&0&0&0&0\\ 
   0&0&0&0&0&0&0\\
 1&0&0&1&0&1&0\\    
0&1&0&-1&1&0 &0\\  
0&0&1&0&-1&-1&0\\ 
 0&0&0&0&0&0&1\\ 
 \end{bmatrix}
 \] 
 The $A$ and $E$ have the same depth-charts; $B$ and $F$ have the same depth-charts.  By Lemma~\ref{depth}, 
 \[
\delta_i:= \min_{S\in A_i(B)} \dim S = d_A(6-i)+d_B(i)=d_E(6-i)+d_F(i).
\]
So
\[
\begin{tabular}{l || lllllll }
 $i$& 0 &1  &2  &3&4  &5  &6  \\   \hline
 $d_E(6-i)$ & 3 &3  &3  &2 &2  &1  &0 \\
  $d_F(i)$ & 0 &1  &2  &2 &3  &3  &3 \\
$\delta_i$ & 3 &4  &5  &4 &5  &4  &3 \\
\end{tabular}
\]
Apply Lemma~\ref{key} with $k=6$ and $a=3$, we see that the only pliable subsets of vectors for $[z_3|z_4]_{\SL_7(\Z)}$ are $A$ and some set(s) of 6 vectors of $B$.  In fact, the only set that works are the first 6 vectors of $B$. 
Now $[z_3|z_4]_{\SL_7(\Z)}=[A,B]_{\SL_7(\Z)}$. 
Therefore, in the formula for 
$(\mu_3\times\mu_4)([z_3|z_4]_{\SL_7(\Z)})$ there are two terms. The first term, corresponding to the trivial permutation,
is $\mu_3(z_3)\mu_4(z_4)$, which is a nonzero number, call it $t$.

 The other term corresponds to the shuffle permutation 
that switches the 6 vectors of $A$ and the first 6 vectors of $B$, which is an even
permutation.  
Let $B'$ denote the first 6 vectors of $B$, and let $e_7$ be the last vector of $B$.  Let $A'$ be the 6 vectors of $A$ plus $e_7$.
Let 
\[
\gamma=\begin{bmatrix}
0&-I_3&0\\
I_3&0&0\\
0&0&1
\end{bmatrix}.
\]
Then the term in question equals 
\[
\mu_3([\gamma B']_{\SL_3(\Z)})\mu_4([\pi(\gamma A')]_{\SL_4(\Z)}) =
\mu_3([A]_{\SL_3(\Z)})\mu_3([\pi B]_{\SL_4(\Z)})
=\mu_3(z_3)\mu_4(z_4).
\]
The result is again $t$.  So 
$(\mu_3\times\mu_4)([z_3|z_4]_{\SL_7(\Z)})=2t\ne0$.
 
 \end{proof}

 \section{Larger $n$}
  We can fruitfully compose $z_3$ with itself any number of times and 
 we can also compose the result with $z_1$, or equivalently use one fewer $z_3$ and compose the result with $z_4$.
 
  \begin{definition}
 Let $k\ge1$.  Define $z_{3k}$ by induction: $z_3$ and $z_4$ have already been defined and if $k>1$,
 set 
 \begin{gather*}
  z_{3k}=[z_3|z_{3k-3}]_{\SL_{3k}(\Z)},\\
z_{3k+1}=[z_3|z_{3k-2}]_{\SL_{3k+1}(\Z)}\\
  \mu_{3k}=\mu_3\times \mu_{3k-3}\\
  \mu_{3k+1}=\mu_3\times \mu_{3k-2}.
\end{gather*}
 \end{definition}
 
 \begin{lemma}\label{1}
 Let $k\ge 1$ and let $C$ be either the set of vectors that appear in $z_{3k}$ or that appear in $z_{3k+1}$.
Then the depth chart of $C$ begins like
\[
\begin{tabular}{l || llllllll }
 $i$& 0 &1  &2  &3&4  &5  &6&\dots  \\   \hline
 $d_C(i)$ & 0 &1  &2  &2 &3  &3  &3&\dots \\
\end{tabular}
\]
 \end{lemma}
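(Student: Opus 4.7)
The plan is to prove the lemma by induction on $k$, exploiting the recursive structure $z_{3k} = [z_3 | z_{3(k-1)}]_{\SL_{3k}(\Z)}$ and $z_{3k+1} = [z_3 | z_{3(k-1)+1}]_{\SL_{3k+1}(\Z)}$ together with Lemma \ref{depth}, which expresses the depth chart of a direct-sum union in terms of the depth charts of its two summands.

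The base case $k = 1$ is already handled by the computations in Theorems \ref{n6} and \ref{n7}: the tables there record the depth chart of $z_3$ as $(0,1,2,2,3,3,3)$ and the depth chart of $z_4$ as $(0,1,2,2,3,3,3,4)$, both beginning with the required segment.

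For the inductive step with $k \ge 2$, I would decompose $C = A \cup B$, where $A \subset U$ is the image under $f$ of the six columns of $z_3$ and $B \subset V$ is the image under $g$ of the columns of $z_{3(k-1)}$ or $z_{3(k-1)+1}$, with $\Q^n = U \oplus V$ the decomposition used to form the composition. Since $f$ and $g$ are linear isomorphisms onto complementary subspaces, the depth chart of $A$ equals that of the columns of $z_3$, and the depth chart of $B$ equals that of the columns of the smaller sharbly. Both charts begin $(0,1,2,2,3,3,3)$, by the $k=1$ case and the inductive hypothesis respectively. Lemma \ref{depth} then gives
\[
d_C(i) = \min_{0 \le \alpha \le i} \bigl( d_A(i-\alpha) + d_B(\alpha) \bigr),
\]
and for $0 \le i \le 6$ the indices $i - \alpha$ and $\alpha$ both lie in $[0,6]$, a range in which the depth charts of $A$ and $B$ are honestly recorded (note $|A| = 6$ and $|B| \ge 6$, so the extension-by-zero convention never activates). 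Enumerating the at most seven sums for each $i$ and selecting the minimum reproduces the claimed sequence: for example, at $i = 3$ the relevant pairs $(d_A(3-\alpha), d_B(\alpha))$ are $(2,0),(2,1),(1,2),(0,2)$, yielding minimum $2$, and at $i=4$ every one of the five pairs sums to at least $3$.

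The only potential subtlety is the boundary convention $d_A(k)=0$ for $k$ outside $[0,|A|]$, which would corrupt the formula of Lemma \ref{depth} for large $i$; this is precisely why the lemma restricts attention to the first seven entries. Within that range the argument degenerates to a small, purely numerical check, so I expect no real obstacle beyond the bookkeeping of writing out those seven minima.
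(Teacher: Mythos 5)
Your proposal is correct and follows the same route as the paper: induction on $k$ with base case $k=1$ given by the depth charts of $z_3$ and $z_4$, the decomposition $C=A\cup B$ into the vectors of $z_3$ and of the smaller composite, and an application of Lemma~\ref{depth} restricted to $0\le i,\alpha\le 6$. The only difference is that you write out the numerical minimization that the paper dismisses with ``the claimed result easily follows,'' and your computed minima are all correct.
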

 \begin{proof}
 The statement is true for $k=1$.  Now assume it is true for all values less than $k$.
  Let $A$ be the set of vectors that appear in $z_3$ and
 $B$ be the set of vectors that appear in $z_{3k-3}$ (resp. $z_{3k-2}$),
 so that $C=A\cup B$.
 By Lemma~\ref{depth}, 
\[
d_{C}(i)=\min_{0\le \alpha \le i} d_A(i-\alpha)+d_B(\alpha).
\]
We only need to consider $0\le i,\alpha\le 6$.  The claimed result easily follows.
\end{proof}

 \begin{theorem}\label{n3k} Let $k\ge 1$.\ 
 
(1) $(\mu_3\times\mu_{3k})[z_3|z_{3k}]_{\SL_{3k+3}(\Z)})\ne0$.

(2) $H_{3k+3}(\SL_{3k+3}(\Z),St)\ne0$.

(3) $H^{(9/2)(k^2+k)}(\SL_{3k+3}(\Z),\Q)\ne0$.

(4) $(\mu_3\times\mu_{3k+1})[z_3|z_{3k+1}]_{\SL_{3k+4}(\Z)})\ne0$.

(5) $H_{3k+3}(\SL_{3k+4}(\Z),St)\ne0$.

(6) $H^{(9k^2+15k)/2+3}(\SL_{3k+4}(\Z),\Q)\ne0$.
 \end{theorem}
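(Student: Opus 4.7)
The plan is to prove (1) and (4) jointly by induction on $k\ge 1$, deduce (2) and (5) from Lemma~\ref{cosh}, and obtain (3) and (6) from Borel-Serre duality after the degree checks
\[
\nu_{3k+3}-(3k+3) = (3k+3)(3k)/2 = (9/2)(k^2+k),
\]
\[
\nu_{3k+4}-(3k+3) = (3k+3)(3k+2)/2 = (9k^2+15k)/2+3.
\]
The base case $k=1$ is exactly Theorems~\ref{n6} and~\ref{n7}.

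For the inductive step I would first verify that the products $\mu_3\times\mu_{3k}$ and $\mu_3\times\mu_{3k+1}$ are well-defined, i.e., that all the cosharblies in the family share the same parity. Since $\mu_3$ has even parity by the Corollary on odd $n$, it suffices to check inductively that $\mu_{3k}$ and $\mu_{3k+1}$ are both even. By the parity lemma this reduces to showing $s_n z_n = +z_n$ in the coinvariants, which I would establish by absorbing $s_n$ into a column permutation (relation (i)), a sign rescaling on a single column (relation (iii)), and an element of $\SL_n(\Z)$ (relation (iv)), much as was done for $z_2$ in Section 7.

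With parities in hand, compute $(\mu_3\times\mu_{3k})([z_3|z_{3k}]_{\SL_{3k+3}(\Z)})$ along the blueprint of Theorems~\ref{n6} and~\ref{n7}. Let $A$ be the columns of the outer $z_3$ embedded in $U$ and $B$ the columns of $z_{3k}$ embedded in $V$. By Lemma~\ref{1} both depth charts begin $0,1,2,2,3,3,3,\dots$, and Lemma~\ref{depth} gives $\min_{S\in A_i(B)}\dim S = d_A(6-i)+d_B(i)\ge 4 > 3$ for $1\le i\le 5$. Lemma~\ref{key} therefore restricts the pliable size-$6$ subsets of $A\cup B$ to $A$ together with the pliable size-$6$ subsets of $B$. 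Unwinding the recursive definition $z_{3k}=[z_3|z_{3k-3}]$, these are precisely the $k$ internal $z_3$-blocks of $z_{3k}$, so the pairing formula contains $k+1$ terms.

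For each pliable subset $S$, choose $\gamma\in\SL_{3k+3}(\Z)$ that swaps the three-dimensional subspace spanned by $S$ with $U$. The remaining six vectors (the original $A$, carried into $S$'s former slot and then projected via $\pi$) together with the untouched $z_3$-blocks of $B$ reassemble into a sharbly symbol equal to $z_{3k}$, because every block in $z_{3k}$ is literally the same copy of $z_3$. The corresponding shuffle permutation interchanges a block of $6$ vectors past a multiple of $6$ and is therefore even. Hence each of the $k+1$ terms equals $\mu_3(z_3)\,\mu_{3k}(z_{3k})$, and the total pairing is $(k+1)\,\mu_3(z_3)\,\mu_{3k}(z_{3k})\ne 0$ by the inductive hypothesis. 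The argument for (4) is identical with $z_{3k+1}$ in place of $z_{3k}$. The hard part is the sign bookkeeping---confirming that the sharbly symbol obtained after excision-and-projection really equals $z_{3k}$ (respectively $z_{3k+1}$) rather than a sign-twisted variant, and that all $k+1$ contributions carry the same sign---but once the parity of $\mu_\bullet$ is established, the formula above should follow directly.
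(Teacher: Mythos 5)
Your proposal follows essentially the same route as the paper: induction on $k$ with base cases Theorems~\ref{n6} and~\ref{n7}, the degree checks via Borel--Serre duality, Lemmas~\ref{depth}, \ref{key} and~\ref{1} to pin down the pliable subsets as the $k+1$ blocks of six columns, and the observation that each of the resulting $k+1$ terms equals $t=\mu_3(z_3)\,\mu(z)\ne0$ because the relevant block-swapping shuffles are even, giving a total of $(k+1)t\ne0$. Your explicit verification that $\mu_{3k}$ and $\mu_{3k+1}$ have even parity (so that the products $\mu_3\times\mu_{3k}$ and $\mu_3\times\mu_{3k+1}$ are actually defined) is a point the paper leaves implicit; otherwise the arguments coincide.
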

 
 \begin{proof}
 As before, we only need to prove (1) and (4).    Call (1) the ``first case'' and (4) the ``second case''.
 Let $z$ stand for either $z_{3k}$ or $z_{3k+1}$, $\mu$ stand for either $\mu_{3k}$ 
 or $\mu_{3k+1}$, and $n$ for $3k+3$ or $3k+4$,
  in the first and second cases, respectively.  
  
  Let $e_1,\dots,e_n$ be the standard basis of $\Q^n$.  Let
  \[
  V_0=<e_1,e_2,e_3>, V_1= <e_4,e_5,e_{6}>,\dots,
  V_k=<e_{3k+1},e_{3k+2},e_{3k+3}>.
  \]
Then $\Q^n=\oplus_{\beta=0}^k V_\beta$ in the first case and
   $\Q^n=\oplus_{\beta=0}^k V_\beta\oplus \Q e_n$ in the second case.
   Let $\iota_\beta:\Q^3\to \Q^n$ be the embedding that sends $e_1$ to $e_{3\beta+1}$, $e_2$ to $e_{3\beta+2}$,  and $e_3$ to $e_{3\beta+3}$.
  The columns of $z$ come in sets of 6, each of which is the image under 
  $\iota_\beta$ of the set of columns of $z_3$, for 
  $\beta=0,\dots,k$, plus one more column, namely $e_n$, in the second case.  Let $S_\beta$ be the set of 6 columns in $z$ which lie in $V_\beta$.  
  For $\beta=0, \dots,k$, 
  let $\sigma_\beta$ denote the shuffle permutation that swaps $S_0$ and $S_\beta$.  (So that $\sigma_0$ is the identity permutation.)
  
  When computing 
  $(\mu_3\times\mu)[z_3|z]_{\SL_n(\Z)})$,
  according to the definition, by Lemma~\ref{1}, we only need to consider permutations that exchange all 6 columns of $z_3$ with some choice of 6 columns of $z$.
  In fact, one sees easily that 
  the only pliable sets of columns of $[z_3|z]_{\SL_n(\Z)}$
  are the $S_\beta$.  
  Let $\mu_3([z_3]_{\SL_3(\Z)})\times\mu([z]_{\SL_{n-3}(\Z)})=t$.  
By induction $t\ne0$.
  Just as in the proofs of 
  Theorems~\ref{n6} and~\ref{n7}, we see that the term in the sum for
     $(\mu_3\times\mu)[z_3|z]_{\SL_n(\Z)})$
     involving $\sigma_\beta$ is also equal to $t$.  Therefore
  $(\mu_3\times\mu)([z_3|z]_{\SL_{n}(\Z)})=(k+1)t\ne0$.
 \end{proof}

\bibliographystyle{amsalpha}
\bibliography{Ash-cosharblies}

\end{document}